\documentclass[12pt]{amsart}

\oddsidemargin=-5mm
\evensidemargin=-5mm
\textwidth=174mm
\textheight=227mm
\topmargin=-5mm

\usepackage{amscd, amsmath, amssymb, amsthm, amsfonts, amsxtra, enumerate, latexsym, tabularx, setspace}
\usepackage[all]{xy}
\usepackage[dvips]{graphicx,color}

\makeatletter
\@namedef{subjclassname@2020}{\textup{2020} Mathematics Subject Classification}
\makeatother

\def\Der{{\mathrm{Der}}}
\def\divisor{{\mathrm{div}}}
\def\Frac{{\mathrm{Frac}}}
\def\Proj{{\mathrm{Proj\; }}}
\def\Spec{{\mathrm{Spec\; }}}

\theoremstyle{plain}
\newtheorem{thm}{Theorem}[section]
\newtheorem*{theorem}{Main Theorem}
\newtheorem{crl}[thm]{Corollary}
\newtheorem{lem}[thm]{Lemma}

\theoremstyle{definition}

\newtheorem{exa}[thm]{Example}

\theoremstyle{remark}

\newtheorem*{acknowledgment}{Acknowledgments}

\title{Singular points configurations on quotients of the projective plane by $1$-foliations of degree $-1$ in characteristic $2$}
\author{Tadakazu Sawada}
\address{Department of General Education, National Institute of Technology, Fukushima College, 
30 Aza-Nagao, Kamiarakawa, Iwaki-shi, Fukushima 970-8034, Japan}
\email{sawada@fukushima-nct.ac.jp}

\subjclass[2020]{14G17, 14J17, 14B05}
\keywords{Frobenius sandwiches, quotients by $1$-foliations}

\begin{document}
\maketitle
\markboth{Tadakazu Sawada}{Singular points configurations on quotients of $\mathbb{P}^2$ by $1$-foliations of degree $-1$ in char $2$}

\begin{abstract}
In this paper, we classify the configurations of the singular points which appear on the quotients of the projective plane by the $1$-foliations of 
degree $-1$ in characteristic $2$. 
\end{abstract}

\section*{Introduction}
We work over an algebraically closed field $k$ of characteristic $p>0$. Let $X$ be a smooth algebraic variety and $F:X\rightarrow X$ 
be the Frobenius morphism of $X$. If a normal variety $Y$ decomposes the iterated Frobenius morphism as $F^e: X\rightarrow Y\rightarrow X$, 
we say that $Y$ is an $F^e$-sandwich of $X$. Since any purely inseparable morphism $f:X\rightarrow Y$ factors $F^e$ for some 
$e\geq 1$, the study of $F^e$-sandwiches leads to the study of purely inseparable morphisms. In particular, it is expected that the 
classifications of $F^e$-sandwiches play an important role in algebraic geometry in positive characteristics. 

Let $T_X$ be the tangent bundle of $X$. Given an invertible $1$-foliation $L\subset T_X$, see Section 1 for the definition of $1$-foliations, 
we have a Frobenius sandwich $X/L$ as the quotient of $X$ by $L$. In this paper, we consider the configurations of the singular points 
which appear on the quotients of the projective plane by the $1$-foliations $L$ of degree $-1$, $0$ and $1$ in characteristic $2$. In particular, 
we give the classification of the configurations of the case where $\deg L=-1$. 
\begin{theorem}[cf. Theorem~\ref{main}]
Let $Y$ be the quotient of the projective plane by an invertible $1$-foliation $L$ of degree $-1$. Then the configuration of the singular points 
of $Y$ coincides with the type $7A_1$, $D_4^0+3A_1$, $D_6^0+A_1$ or $E_7^0$. 
\end{theorem}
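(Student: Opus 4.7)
The plan combines a Chern-class count of the singular scheme, a local classification of Du Val singularities arising from rank-one $p$-closed foliations in characteristic two, and an explicit realization in each arithmetically possible case.

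First, I would compute the total length of the singular scheme of the foliation. Since $L = \mathcal{O}_{\mathbb{P}^2}(-1)$, the torsion of the quotient $T_{\mathbb{P}^2}/L$ has length
\[
c_1(L)^2 - c_1(L)\cdot c_1(T_{\mathbb{P}^2}) + c_2(T_{\mathbb{P}^2}) = 1 + 3 + 3 = 7,
\]
and standard formul\ae{} for Frobenius sandwiches transport this number to $Y$ as the sum of the Dynkin ranks of its Du Val singularities.

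Next, I would appeal to the classification (Artin, Hirokado, Liedtke, Lipman, etc.) of the Du Val singularities that can arise on the quotient of a smooth surface by a rank-one $p$-closed foliation in characteristic two: the admissible types are $A_1$, $D_{2n}^0$ for $n\geq 2$, $E_7^0$, and $E_8^0$, of Dynkin ranks $1$, $4,6,8,\dots$, $7$ and $8$ respectively. Restricting to individual ranks at most $7$ (forced by the Chern count), the allowable parts are $\{1,4,6,7\}$, and the partitions of $7$ using only these parts are exactly
\[
7 \;=\; 7 \;=\; 6+1 \;=\; 4+1+1+1 \;=\; 1+1+1+1+1+1+1,
\]
corresponding precisely to the four configurations $E_7^0$, $D_6^0 + A_1$, $D_4^0 + 3A_1$, and $7A_1$.

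Finally, I would exhibit for each of the four types an explicit degree $-1$ foliation realizing it. Writing the foliation in affine coordinates as a derivation $D = A\partial_x + B\partial_y$ with $\deg A,\deg B \leq 2$, imposing the $p$-closed condition $D^2 = hD$, and normalizing under $\mathrm{PGL}_3$, one computes the singular scheme and checks each local type against the classification by resolving the singularities.

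The main obstacle is the last step: constructing the $E_7^0$ example, whose foliation must have a single highly degenerate zero, and verifying more generally that none of the four arithmetically admitted configurations is globally obstructed on $\mathbb{P}^2$. If the local classification of admissible types is taken as input from the literature, the combinatorial enumeration above is clean, and the remaining effort is the explicit construction of representatives together with a demonstration that every global degree $-1$ $1$-foliation in characteristic two falls into one of the four buckets.
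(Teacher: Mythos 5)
Your strategy is genuinely different from the paper's. You share the opening move, the Chern-class count $c_2(I_Z)=n^2-3n+3=7$ for $n=-1$, which is exactly Lemma~\ref{1}; but from there the paper proceeds by brute force. Lemma~\ref{lem-of-der} reduces every foliation of degree $\geq -1$ to a normal form $\delta=F\partial_x+G\partial_y$ with $F=a_{30}x^3+a_{12}xy^2+a_{20}x^2+a_{02}y^2+a_{10}x$ and $G=a_{30}x^2y+a_{12}y^3+b_{20}x^2+b_{02}y^2+a_{10}y$, and Theorem~\ref{main} is then proved by an exhaustive case analysis on the vanishing of the coefficients, computing the invariant ring $k[x,y]^{\delta}$ explicitly at every candidate singular point. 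Your route (local classification of admissible singularities plus partitions of $7$ into parts from $\{1,4,6,7\}$) is far shorter and more conceptual, and the combinatorics is correct: those four partitions match the four types. What you lose is the positional data: the paper's ``configuration types'' are defined by the figures in Example~\ref{example-2} and record how the singular points sit relative to one another and to the coordinate lines (e.g.\ the seven $A_1$-points in type $7A_1$ form a specific incidence configuration), whereas your argument only determines the multiset of singularity types.

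Beyond that, two inputs you treat as standard are genuine gaps. First, you must prove that every singular point of $Y$ is a rational double point before invoking any list of admissible RDPs. Locally $Y$ is a hypersurface $z^2=f(x,y)$, since $k[x^2,y^2]\subset k[x,y]^{\delta}\subset k[x,y]$ has index $2$, but such a singularity need not be rational (e.g.\ $z^2=x^3+y^7$); non-RDPs have to be excluded, presumably by showing that any such local type forces the local colength of $Z$ to exceed $7$, and this requires an actual argument in characteristic $2$, where Milnor-type invariants do not behave as in characteristic $0$. Once RDP-ness is known, your list $A_1$, $D^0_{2n}$, $E^0_7$, $E^0_8$ is correct: these are precisely the characteristic-$2$ RDPs of the form $z^2=f(x,y)$ in Artin's normal forms. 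Second, the identity ``local colength of $Z$ at a zero of the foliation equals the number of exceptional curves over the corresponding point of $Y$'' is a theorem, not a bookkeeping formality; it again presupposes RDP-ness and needs a precise citation or proof (it does check out on all of the paper's examples). Finally, the realization step you defer is exactly what Example~\ref{example-2} supplies: $x(x+a)\partial_x+y(by+a)\partial_y$ for $7A_1$, $x^2\partial_x+ay^2\partial_y$ for $D_4^0+3A_1$, $(x^2+axy^2)\partial_x+ay^3\partial_y$ for $D_6^0+A_1$, and $xy^2\partial_x+(ax^2+y^3)\partial_y$ for $E_7^0$. With the two missing inputs supplied your argument proves the multiset statement cleanly; to recover the theorem as the paper states it you would still need to pin down the positions of the singular points.
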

\noindent See Section 2 for the definition of the configuration types of the singular points. This result is a first step of an analysis of the 
purely inseparable coverings of the projective plane. 

In Section 1, we review the generalities of Frobenius sandwiches and fix a notation. In Section 2, we give the examples of the $F$-sandwiches 
of the affine plane and the projective plane in characteristic $2$. In Section 3, we consider the configurations of the singular points which appear 
on the quotients of the projective plane by the $1$-foliations of degree $-1$, $0$ and $1$ in characteristic $2$. 

In what follows, we adopt the notation of \cite{H}. 

\begin{acknowledgment}
This work is supported by JSPS KAKENHI Grant Number JP20K03526. 
\end{acknowledgment}

\section{Preliminaries}
First, we review the generalities of Frobenius sandwiches. Most of the following is citation from \cite{S4}. Let 
$X$ be an algebraic variety over $k$. The {\it absolute Frobenius morphism} $F: X \rightarrow X$ is the identity 
on the underlying topological space of $X$, and the $p$-th power map on the structure sheaf $\mathcal{O}_X$. 
Let $X^{(-1)}$ be the base change of $X$ by the absolute Frobenius morphism of $\Spec k$. The {\it relative 
Frobenius morphism} $F_{\rm rel}: X \rightarrow X^{(-1)}$ is defined by the following Cartesian square: 
$$\xymatrix
{X \ar@/^8mm/[rr]^F \ar[r]^{F_{\rm rel}} \ar[dr] & X^{(-1)} \ar[r] \ar[d] & X \ar[d] \\
                                                          & \Spec k \ar[r]^F     & \Spec k
}$$
In what follows, we do not strictly distinguish the absolute and relative Frobenius morphisms, since varieties are defined over the algebraically 
closed field $k$ in this paper. 

Let $X$ be a smooth variety over $k$. A normal variety $Y$ is an {\it $F^e$-sandwich} of $X$ if the $e$-th iterated 
relative Frobenius morphism of $X$ is decomposed as 
$$\xymatrix
{X \ar[rr]^{F^e_{\rm rel}} \ar[rd]_{\pi}  &                  & X^{(-e)} \\
                                          & Y \ar[ru]_{\rho} &
}$$
for some finite $k$-morphisms $\pi : X \rightarrow Y$ and $\rho : Y \rightarrow X^{(-e)}$, which are homeomorphisms 
in the Zariski topology. An {\it $F$-sandwich} means an $F^1$-sandwich. 
We say that 
\begin{itemize}
\item a variety $Y$ is a {\it Frobenius sandwich} of $X$ if $Y$ is an $F^e$-sandwich of $X$ for some $e\geq 1$, 
\item an $F^e$-sandwich $X\xrightarrow{\pi}Y\xrightarrow{\rho} X^{(-e)}$ is {\it of degree $p$} if the degree of the morphism $\pi$ is $p$, and 
\item $F^e$-sandwiches $X\xrightarrow{\pi}Y\xrightarrow{\rho} X^{(-e)}$ and $X\xrightarrow{\pi'}Y'\xrightarrow{\rho'} X^{(-e)}$ 
are {\it equivalent} if there exists an isomorphism $f:Y\rightarrow Y'$ such that $f\circ \pi=\pi'$ and $\rho =\rho' \circ f$: 
$$\xymatrix
{X \ar[r]^{F^e_{\rm rel}} \ar[d]_{\pi} \ar[rd]^(.34){\!\!\!\pi'} & X^{(-e)} \\
 Y \ar[ru]_(.34){\!\!\!\rho} \ar[r]_{f} & Y' \ar[u]_{\rho'}
}$$
\end{itemize}

Let $R$ be a $k$-algebra and $\Der_k R$ be the set of all $k$-derivations on $R$. A derivation $\delta\in \Der_k R$ 
is {\it $p$-closed} if $\delta^p=f\delta$ for some $f\in R$, where $\delta^p$ is a $p$-times iterated composite as a 
differential operator. By a {\it $1$-foliation} of $X$, we mean a saturated $p$-closed subsheaf $L$ of the tangent 
bundle $T_X$ closed under the Lie brackets, where $L$ is said to be {\it $p$-closed} if $\delta^p\in L$ for all $\delta\in L$. 
We say that $1$-foliations $L$ and $L'$ are {\it equivalent} if there exists an isomorphism $f$ such that the inclusion 
morphism $L\hookrightarrow T_X$ is factored as $L\xrightarrow{f}L'\hookrightarrow T_X$: 
$$\xymatrix
{L \ar@{^{(}->}[r] \ar[d]_{f}  & T_X \\
 L' \ar@{^{(}->}[r] & T_X \ar@{=}[u]
}$$

It is known that there are one-to-one correspondences among 
\begin{itemize}
\item the equivalence classes of the $F$-sandwiches of $X$ of degree $p$, 
\item the equivalence classes of the invertible $1$-foliations of $X$, and
\item the equivalence classes of the non-zero $p$-closed rational vector fields of $X$. 
\end{itemize}
In what follows, we quickly review these correspondences. See \cite{RS}, \cite{E} and \cite{Hirokado} for details. 

\noindent (1) $F$-sandwiches and $1$-foliations: The correspondence is given by
$$Y \mapsto L=\{\delta\in T_X\,|\,\delta(f)=0\text{ for all }f\in \mathcal{O}_Y\}\subset T_X$$
and
$$L \mapsto 
\mathcal{O}_Y=\{f\in\mathcal{O}_X\,|\,\delta(f)=0\text{ for all }\delta\in L\}\subset \mathcal{O}_X.$$
The well-definedness of this correspondence is guaranteed by the Galois correspondence due to Aramova and Avramov 
\cite{AA}. 

\noindent (2) $F$-sandwiches and rational vector fields: A {\it rational vector field} means an element of 
$\Der_k\, K(X)$, where $K(X)$ is the function field of $X$. We define an equivalence relation $\sim$ between 
rational vector fields $\delta$ and $\delta'$ as follows: $\delta \sim {\delta}'$ if and only if there exists a 
non-zero rational function $\alpha \in K(X)$ such that $\delta = \alpha {\delta}'$. Let $\{U_i = \Spec R_i\}_i$ 
be an affine open covering of $X$. Given a non-zero $p$-closed rational vector field $\delta \in \Der_k\, K(X)$, 
we have the quotient variety $X/\delta$ defined by glueing $\Spec R_i^{\delta}$, where 
$R_i^{\delta}=\{r\in R_i\,|\,\delta(r)=0\}$, and the quotient map $\pi_{\delta} : X \rightarrow X/\delta$ induced 
from the inclusions $R_i^{\delta} \subset R_i$. We see that $R^{\delta}_i$ is a normal domain and the field extension 
$\Frac\, R_i/\Frac\, R_i^{\delta}$ is purely inseparable of degree $p$. This means that $X/\delta$ is an 
$F$-sandwich of degree $p$ with the finite morphism $\pi_{\delta} : X \rightarrow X/\delta$ through which the 
Frobenius morphism of $X$ is decomposed. Conversely, if $Y$ is an $F$-sandwich of $X$ of degree $p$ with a finite 
morphism $\pi : X \rightarrow Y$ through which the Frobenius morphism of $X$ is decomposed, then there exists a 
non-zero $p$-closed rational vector field $\delta\in \Der_k\, K(X)$ such that $\pi = \pi_{\delta}$ and $Y = X/{\delta}$. 
Indeed, there exists a non-zero $p$-closed rational vector field $\delta \in \Der_k\, K(X)$ such that 
$K(X)^{\delta}=K(Y)$ by Baer's result, since the field extension $K(X)/K(Y)$ is purely inseparable of degree 
$p$. Then $\delta$ induces an inclusion $\mathcal{O}_Y \subset \mathcal{O}_{X/{\delta}}$, so that there exists a 
finite birational morphism $X/{\delta} \rightarrow Y$. Since $Y$ is normal, this morphism is an isomorphism. See 
\cite{J} for Baer's result. 

\noindent (3) $1$-foliations and rational vector fields: A non-zero $p$-closed rational vector field 
$\delta \in \Der_k\, K(X)$ is locally expressed as $\alpha \sum f_i \partial/\partial s_i$, where $s_i$ are local 
coordinates, $f_i$ are regular functions without common factors, and $\alpha \in K(X)$. The {\it divisor 
$\divisor\, \delta$ associated to $\delta$} is defined by glueing the divisors $\divisor\, \alpha$. The injective 
morphism $\cdot\, \delta: \mathcal{O}_X(\divisor\, \delta)\rightarrow T_X$ defined locally by 
$h/\alpha \mapsto h(\sum f_i \partial/\partial s_i)$, where $h$ is a regular function, gives 
$\mathcal{O}_X(\divisor\, \delta)$ the structure of an invertible $1$-foliation. We see that the correspondence 
$\delta \mapsto \mathcal{O}_X(\mathrm{div}(\delta))$ gives the one-to-one correspondence between 
the equivalence classes of the non-zero $p$-closed rational vector fields and the equivalence classes of 
the invertible $1$-foliations. 

Let $X$ be a smooth surface and $\delta \in \Der_k\, K(X)$ be a non-zero $p$-closed rational vector 
field of $X$. Suppose that $\delta$ is locally expressed as 
$\delta=\alpha ( f {\partial}/{\partial s} + g {\partial}/{\partial t})$, where $s$ and $t$ are local coordinates, 
$f$ and $g$ are regular functions without common factors, and $\alpha \in K(X)$. Then the singular points 
of the $F$-sandwich $Y=X/\delta$ 
lie on the images 
of the points on $X$ defined by the equations $f=g=0$. 

Next, we fix a notation about the projective plane $\mathbb{P}^2$. Let $X_0$, $X_1$ and $X_2$ be the homogeneous 
coordinates of $\mathbb{P}^2$, i.e., we assume that $\mathbb{P}^2 = \Proj k [X_0, X_1, X_2]$. Let $x=X_1/X_0$ 
and $y=X_2/X_0$ (resp. $z=X_0/X_1$ and $w=X_2/X_1$ ; $u=X_0/X_2$ and $v=X_1/X_2$) be the affine coordinates of 
$U_0 := D_+ (X_0)=\Spec [X_1/X_0, X_2/X_0]$ (resp. $U_1 := D_+ (X_1)=\Spec [X_0/X_1, X_2/X_1]$ ; 
$U_2 := D_+ (X_2)=\Spec [X_0/X_2, X_1/X_2]$). A symbol $\partial_s$ denotes a partial derivation $\partial/\partial_s$ for each 
variable $s=x$, $y$, $z$, $w$, $u$ and $v$. We have $x={1}/{z}={v}/{u}$ and $y={w}/{z}={1}/{u}$, so that 
$\partial_x = - z^2 \partial_z - zw \partial_w = u \partial_v$ and 
$\partial_y = z \partial_w = -u^2 \partial_u - uv \partial_v$. 

\section{Examples}
In this section, we assume that the characteristic $p=2$ and follow the notation about the projective plane 
$\mathbb{P}^2$ of the end of Section 1. See \cite{A} for the notation of the rational double points in positive 
characteristics. 

Let $R$ be a $k$-algebra. A {\it regular vector field of $\Spec R$} means an element of $\Der_k\, R$. 
Let $\mathbb{A}^2=\Spec k[x,y]$ be the affine plane and $\delta \in \Der_k\, k[x,y]$ be a non-zero $p$-closed regular 
vector field of $\mathbb{A}^2$. Then the Frobenius sandwich $Y$ of $\mathbb{A}^2$ as the quotient by $\delta$ is described 
as $Y=\Spec k[x,y]^{\delta}$, where $k[x,y]^{\delta}=\{f\in k[x,y]\,|\,\delta (f) =0\}$. First, we give the examples of 
the singular points which appear on the $F$-sandwiches $Y$ of $\mathbb{A}^2$ of degree $p$ as the quotients by non-zero $p$-closed regular 
vector fields $\delta$. 
\begin{exa} 
Suppose that $a$ and $b$ are non-zero elements of $k$ and $n$ is an integer. 
\begin{enumerate}[\normalfont \rmfamily (1)]
\item $\delta=x\partial_x+y\partial_y$: We have $k[x,y]^{\delta}=k[x^2,y^2,xy]\cong k[X,Y,Z]/(Z^2+XY)$. Thus $Y$ 
has an $A_1$-singularity at the image of the origin of $\mathbb{A}^2$. 
\item $\delta=x(x+a)\partial_x+ay\partial_y$: We have $k[x,y]^{\delta}=k[x^2,y^2,x(x+a)y]\cong k[X,Y,Z]/(Z^2+X(X+a^2)Y)$. 
Let $R=k[X,Y,Z]/(Z^2+X(X+a^2)Y)$. In consideration of the localizations of $R$ by the maximal ideals $(X,Y,Z)R$ and 
$(X+a^2,Y,Z)R$, we see that $Y$ has two $A_1$-singularities at the images of the origin and $(a,0)$ of $\mathbb{A}^2$. 
\item $\delta=x\partial_x+y(ay+1)\partial_y$: As with the case (2), we see that $Y$ has two $A_1$-singularities at 
the images of the origin and $(0,a^{-1})$ of $\mathbb{A}^2$. 
\item $\delta=x(x+a)\partial_x+y(by+a)\partial_y$: As with the case (2), we see that $Y$ has four $A_1$-singularities at 
the images of the origin, $(0, ab^{-1})$, ($a$, 0) and $(a, ab^{-1})$ of $\mathbb{A}^2$. 
\item $\delta=(x^2+anxy^{n-1})\partial_x+ay^n\partial_y\ (n\geq 2)$: We have $k[x,y]^{\delta}=k[x^2,y^2,x^2y+axy^n]
\cong k[x^2,y^2,x^2y+xy^n]\cong k[X,Y,Z]/(Z^2+X^2Y+XY^n)$. Thus $Y$ has a $D_{2n}^0$-singularity at the image of the 
origin of $\mathbb{A}^2$. 
\item $\delta=xy^2\partial_x+(ax^2+y^3)\partial_y$: We have $k[x,y]^{\delta}=k[x^2,y^2,ax^3+xy^3]\cong k[x^2,y^2,x^3+xy^3]
\cong k[X,Y,Z]/(Z^2+X^3+XY^3)$. Thus $Y$ has an $E_7^0$-singularity at the image of the origin of $\mathbb{A}^2$. 
\end{enumerate}

We also see that an $E_8^0$-singularity appears on $Y$ for $\delta=y^4\partial_x+ax^2\partial_y$. The following table 
summarizes these examples. In the table, $nA_1$ stands for the case where $Y$ has $n$ points $A_1$-singularities. 
$$\begin{array}{c|c}
\mbox{non-zero $p$-closed regular vector field $\delta$} & \mbox{singular points which appear on $Y=\mathbb{A}^2/\delta$}\\ \hline
x\partial_x+y\partial_y & A_1 \\ \hline
x(x+a)\partial_x+ay\partial_y, x\partial_x+y(ay+1)\partial_y & 2A_1 \\ \hline
x(x+a)\partial_x+y(by+a)\partial_y & 4A_1 \\ \hline
(x^2+anxy^{n-1})\partial_x+ay^n\partial_y\ (n\geq 2) & D_{2n}^0 \\ \hline
xy^2\partial_x+(ax^2+y^3)\partial_y & E_7^0 \\ \hline
y^4\partial_x+ax^2\partial_y & E_8^0
\end{array}$$
\end{exa}

Next, we give the examples of the singular points which appear on the $F$-sandwiches $Y$ of $\mathbb{P}^2$ of degree $p$ as the quotients by non-zero 
$p$-closed rational vector fields $\delta \in \Der_k\, k(x,y)$. In what follows, $\mathcal{O}_X(\divisor\, \delta)$ is written as $L$ and 
$\pi_{\delta}$ is abbreviated to $\pi$. 
\begin{exa}\label{example-2}
Suppose that $a$ and $b$ are non-zero elements of $k$. 
\begin{enumerate}[\normalfont \rmfamily (1)]
\item $\delta=x\partial_x+y\partial_y$: 
We have 
$$\delta=x\partial_x+y\partial_y=z\partial_z=u\partial_u.$$
Thus $\deg L=1$. Moreover we see that $Y$ has an $A_1$-singularity at the image of the origin of $U_0$ and is smooth 
on the images $\pi(U_1)$ and $\pi(U_2)$. Since $\pi:\mathbb{P}^2\rightarrow Y$ is homeomorphic on the underlying topological spaces, we may identify 
$\mathbb{P}^2$ and $Y$ through $\pi$ as topological spaces. Under this identification, we can figure out the configuration 
of the singular points of $Y$ as follows: 

\centerline{
\unitlength 0.1in
\begin{picture}( 16.0000, 18.0000)(  4.0000,-18.0000)
%
{\color[named]{Black}{%
\special{pn 8}%
\special{pa 600 1600}%
\special{pa 1400 200}%
\special{fp}%
}}%
%
{\color[named]{Black}{%
\special{pn 8}%
\special{pa 1000 200}%
\special{pa 1800 1600}%
\special{fp}%
}}%
%
{\color[named]{Black}{%
\special{pn 8}%
\special{pa 400 1280}%
\special{pa 2000 1280}%
\special{fp}%
}}%
%
{\color[named]{Black}{%
\special{pn 0}%
\special{sh 1.000}%
\special{ia 780 1280 30 30  0.0000000 6.2831853}%
}}%
{\color[named]{Black}{%
\special{pn 8}%
\special{ar 780 1280 30 30  0.0000000 6.2831853}%
}}%
\put(5.5000,-12.3000){\makebox(0,0)[lb]{$A_1$}}%
\put(4.0000,-17.0000){\makebox(0,0)[lb]{$x=0$}}
\put(-0.3000,-13.8000){\makebox(0,0)[lb]{$y=0$}}
\put(16.3000,-16.9800){\makebox(0,0)[lb]{$z=0$}}
\put(20.4000,-13.2000){\makebox(0,0)[lb]{$w=0$}}
\put(5.5000,-3.0000){\makebox(0,0)[lb]{$u=0$}}
\put(14.6000,-2.9300){\makebox(0,0)[lb]{$v=0$}}
\end{picture}%
}

\noindent If we can settle the singular points as this configuration by suitable coordinate changes, we say that the 
configuration of the singular points of $Y$ is the {\it type $A_1$}. 

In what follows, we omit the representations of $x=0$, $y=0$, $z=0$, $w=0$, $u=0$ and $v=0$ in the figures of the 
configuration of the singular points. 

\item $\delta=x^2\partial_x+ay^2\partial_y$: We have 
$$\delta=x^2\partial_x+ay^2\partial_y=\dfrac{1}{z}(z\partial_z+w(aw+1)\partial_w)=\dfrac{a}{u}(u\partial_u+v(a^{-1}v+1)\partial_v).$$
Thus $\deg L=-1$. Moreover we see that $Y$ has a $D_4^0$-singularity at the image of the origin of $U_0$ and 
two $A_1$-singularities on $\pi(U_1)$ and $\pi(U_2)$. We can figure out the configuration of the singular points 
of $Y$ as follows: 

\centerline{
\unitlength 0.1in
\begin{picture}( 16.0000, 18.0000)(  4.0000,-18.0000)
%
{\color[named]{Black}{%
\special{pn 8}%
\special{pa 600 1600}%
\special{pa 1400 200}%
\special{fp}%
}}%
%
{\color[named]{Black}{%
\special{pn 8}%
\special{pa 1000 200}%
\special{pa 1800 1600}%
\special{fp}%
}}%
%
{\color[named]{Black}{%
\special{pn 8}%
\special{pa 400 1280}%
\special{pa 2000 1280}%
\special{fp}%
}}%
%
{\color[named]{Black}{%
\special{pn 0}%
\special{sh 1.000}%
\special{ia 780 1280 30 30  0.0000000 6.2831853}%
}}%
{\color[named]{Black}{%
\special{pn 8}%
\special{ar 780 1280 30 30  0.0000000 6.2831853}%
}}%
%
{\color[named]{Black}{%
\special{pn 0}%
\special{sh 1.000}%
\special{ia 1620 1280 30 30  0.0000000 6.2831853}%
}}%
{\color[named]{Black}{%
\special{pn 8}%
\special{ar 1620 1280 30 30  0.0000000 6.2831853}%
}}%
%
{\color[named]{Black}{%
\special{pn 0}%
\special{sh 1.000}%
\special{ia 1200 550 30 30  0.0000000 6.2831853}%
}}%
{\color[named]{Black}{%
\special{pn 8}%
\special{ar 1200 550 30 30  0.0000000 6.2831853}%
}}%
%
{\color[named]{Black}{%
\special{pn 0}%
\special{sh 1.000}%
\special{ia 1410 920 30 30  0.0000000 6.2831853}%
}}%
{\color[named]{Black}{%
\special{pn 8}%
\special{ar 1410 920 30 30  0.0000000 6.2831853}%
}}%
\put(5.6000,-11.8000){\makebox(0,0)[lb]{$D_4^0$}}%
\put(13.6000,-5.7000){\makebox(0,0)[lb]{$A_1$}}%
\put(15.5000,-9.4000){\makebox(0,0)[lb]{$A_1$}}%
\put(18.3000,-15.0000){\makebox(0,0)[lb]{$A_1$}}%
\end{picture}%
}

\noindent If we can settle the singular points as this configuration by suitable coordinate changes, we say that the 
configuration of the singular points of $Y$ is the {\it type $D_4^0+3A_1$}. 

\item $\delta=x(x+a)\partial_x+ay\partial_y$: We have 
$$\delta =x(x+ a) d_x + ay d_y=\dfrac{a}{z}(z(z+a^{-1})d_z+a^{-1}wd_w)=\dfrac{a}{u}(u^2d_u+a^{-1}v^2d_v).$$
Thus $\deg L= -1$ and the configuration of the singular points of $Y$ is the type $D_4^0+3A_1$. This holds also 
for $\delta=x\partial_x+y(ay+1)\partial_y$. 

\item $\delta=x(x+a)\partial_x+y(by+a)\partial_y$: We have 
\begin{eqnarray*}
\delta &=& x(x+ a) d_x + y(by + a) d_y\\
&=&\dfrac{a}{z}(z(z+a^{-1})d_z+w(a^{-1}bw+a^{-1})d_w)=\dfrac{a}{u}(u(u+a^{-1}b)d_u+v(a^{-1}v+a^{-1}b)d_v).
\end{eqnarray*}
Thus $\deg L=-1$. Moreover we see that $Y$ has four $A_1$-singularities on $\pi(U_0)$, $\pi(U_1)$ and 
$\pi(U_2)$. We can figure out the configuration of the singular points of $Y$ as follows: 

\centerline{
\unitlength 0.1in
\begin{picture}( 16.0000, 18.0000)(  4.0000,-18.0000)
%
{\color[named]{Black}{%
\special{pn 8}%
\special{pa 600 1600}%
\special{pa 1400 200}%
\special{fp}%
}}%
%
{\color[named]{Black}{%
\special{pn 8}%
\special{pa 1000 200}%
\special{pa 1800 1600}%
\special{fp}%
}}%
%
{\color[named]{Black}{%
\special{pn 8}%
\special{pa 400 1280}%
\special{pa 2000 1280}%
\special{fp}%
}}%
%
{\color[named]{Black}{%
\special{pn 0}%
\special{sh 1.000}%
\special{ia 780 1280 30 30  0.0000000 6.2831853}%
}}%
{\color[named]{Black}{%
\special{pn 8}%
\special{ar 780 1280 30 30  0.0000000 6.2831853}%
}}%
%
{\color[named]{Black}{%
\special{pn 0}%
\special{sh 1.000}%
\special{ia 1620 1280 30 30  0.0000000 6.2831853}%
}}%
{\color[named]{Black}{%
\special{pn 8}%
\special{ar 1620 1280 30 30  0.0000000 6.2831853}%
}}%
{\color[named]{Black}{%
\special{pn 0}%
\special{sh 1.000}%
\special{ia 1200 1280 30 30  0.0000000 6.2831853}%
}}%
{\color[named]{Black}{%
\special{pn 8}%
\special{ar 1200 1280 30 30  0.0000000 6.2831853}%
}}%
%
{\color[named]{Black}{%
\special{pn 0}%
\special{sh 1.000}%
\special{ia 1200 550 30 30  0.0000000 6.2831853}%
}}%
{\color[named]{Black}{%
\special{pn 8}%
\special{ar 1200 550 30 30  0.0000000 6.2831853}%
}}%
%
{\color[named]{Black}{%
\special{pn 0}%
\special{sh 1.000}%
\special{ia 1410 920 30 30  0.0000000 6.2831853}%
}}%
{\color[named]{Black}{%
\special{pn 8}%
\special{ar 1410 920 30 30  0.0000000 6.2831853}%
}}%
{\color[named]{Black}{%
\special{pn 0}%
\special{sh 1.000}%
\special{ia 1200 1050 30 30  0.0000000 6.2831853}%
}}%
{\color[named]{Black}{%
\special{pn 8}%
\special{ar 1200 1050 30 30  0.0000000 6.2831853}%
}}%
{\color[named]{Black}{%
\special{pn 0}%
\special{sh 1.000}%
\special{ia 990 920 30 30  0.0000000 6.2831853}%
}}%
{\color[named]{Black}{%
\special{pn 8}%
\special{ar 990 920 30 30  0.0000000 6.2831853}%
}}%
\end{picture}%
}

\noindent In the figure, the solid circles $\bullet$ stand for $A_1$-singularities. 
If we can settle the singular points as this configuration by suitable coordinate changes, we say that the configuration 
of the singular points of $Y$ is the {\it type $7A_1$}. 

\item $\delta=(x^2+axy^2)\partial_x+ay^3\partial_y$: 
We have 
$$\delta=(x^2+axy^2)\partial_x+ay^3\partial_y=\dfrac{1}{z}((z+aw^2)\partial_z+w\partial_w)=\dfrac{1}{u}(a\partial_u+v^2\partial_v).$$
Thus $\deg L=-1$ and $Y$ is smooth on $\pi(U_2)$. On $\pi(U_0)$, we have $k[x,y]^{\delta}=k[x^2,y^2,x^2y+axy^3]
\cong k[x^2,y^2,x^2y+xy^3]\cong k[X,Y,Z]/(Z^2+X^2Y+XY^3)$. On $\pi(U_1)$, we have $k[z,w]^{\delta}=k[z^2,w^2,zw+aw^3]
=k[z^2,w^2,(z+aw^2)w]\cong k[z^2,w^2,zw]\cong k[X,Y,Z]/(Z^2+XY)$. Therefore $Y$ has a $D_6^0$-singularity at the 
image of the origin of $U_0$ and an $A_1$-singularity at the image of the origin of $U_1$. We can figure out the 
configuration of the singular points of $Y$ as follows: 

\centerline{
\unitlength 0.1in
\begin{picture}( 16.0000, 18.0000)(  4.0000,-18.0000)
%
{\color[named]{Black}{%
\special{pn 8}%
\special{pa 600 1600}%
\special{pa 1400 200}%
\special{fp}%
}}%
%
{\color[named]{Black}{%
\special{pn 8}%
\special{pa 1000 200}%
\special{pa 1800 1600}%
\special{fp}%
}}%
%
{\color[named]{Black}{%
\special{pn 8}%
\special{pa 400 1280}%
\special{pa 2000 1280}%
\special{fp}%
}}%
%
{\color[named]{Black}{%
\special{pn 0}%
\special{sh 1.000}%
\special{ia 780 1280 30 30  0.0000000 6.2831853}%
}}%
{\color[named]{Black}{%
\special{pn 8}%
\special{ar 780 1280 30 30  0.0000000 6.2831853}%
}}%
%
{\color[named]{Black}{%
\special{pn 0}%
\special{sh 1.000}%
\special{ia 1620 1280 30 30  0.0000000 6.2831853}%
}}%
{\color[named]{Black}{%
\special{pn 8}%
\special{ar 1620 1280 30 30  0.0000000 6.2831853}%
}}%
\put(5.5000,-12.0000){\makebox(0,0)[lb]{$D_6^0$}}%
\put(16.3000,-12.0000){\makebox(0,0)[lb]{$A_1$}}%
\end{picture}%
}

\noindent If we can settle the singular points as this configuration by suitable coordinate changes, we say that the 
configuration of the singular points of $Y$ is the {\it type $D_6^0+A_1$}. 

\item $\delta=xy^2\partial_x+(ax^2+y^3)\partial_y$: We have 
$$\delta=xy^2\partial_x+(ax^2+y^3)\partial_y=\dfrac{1}{z}(w^2\partial_z+a\partial_w)=\dfrac{1}{u}((auv^2+1)\partial_u+av^3\partial_v).$$
Thus $\deg L=-1$. Moreover we see that $Y$ has an $E_7^0$-singularity at the image of the origin of $U_0$ and 
is smooth on $\pi(U_1)$ and $\pi(U_2)$. We can figure out the configuration of the singular points of $Y$ as follows: 

\centerline{
\unitlength 0.1in
\begin{picture}( 16.0000, 18.0000)(  4.0000,-18.0000)
%
{\color[named]{Black}{%
\special{pn 8}%
\special{pa 600 1600}%
\special{pa 1400 200}%
\special{fp}%
}}%
%
{\color[named]{Black}{%
\special{pn 8}%
\special{pa 1000 200}%
\special{pa 1800 1600}%
\special{fp}%
}}%
%
{\color[named]{Black}{%
\special{pn 8}%
\special{pa 400 1280}%
\special{pa 2000 1280}%
\special{fp}%
}}%
%
{\color[named]{Black}{%
\special{pn 0}%
\special{sh 1.000}%
\special{ia 780 1280 30 30  0.0000000 6.2831853}%
}}%
{\color[named]{Black}{%
\special{pn 8}%
\special{ar 780 1280 30 30  0.0000000 6.2831853}%
}}%
\put(5.6000,-11.8000){\makebox(0,0)[lb]{$E_7^0$}}%
\end{picture}%
}

\noindent If we can settle the singular points as this configuration by suitable coordinate changes, we say that the 
configuration of the singular points of $Y$ is the {\it type $E_7^0$}. 
\end{enumerate}
\end{exa}

\section{Main theorem}
Also in this section, we assume that the characteristic $p=2$ and follow the notation about the projective plane 
$\mathbb{P}^2$ of the end of Section 1. 

Let $Y$ be the $F$-sandwich of $\mathbb{P}^2$ of degree $p$, $L\subset T_{\mathbb{P}^2}$ (resp. $\delta\in 
\Der_k\,K(\mathbb{P}^2)$) be the corresponding invertible $1$-foliation (resp. the non-zero $p$-closed rational vector 
field). We have an exact sequence $0\rightarrow L\rightarrow T_{\mathbb{P}^2}\rightarrow I_Z\otimes L' \rightarrow 0$, 
where $Z$ is a $0$-dimensional subscheme whose (topological) image coincides with the support of the singular 
points of $Y$, $I_Z$ is an ideal sheaf of $Z$, and $L'$ is an invertible sheaf. The following fact is reffed in \cite{GR}. 
For the convenience of readers we give proof here. 

\begin{lem}[Ganong-Russell\cite{GR}]\label{1}
Let $H\subset \mathbb{P}^2$ be a line. If $c_1(L)=nH$, then $c_2(I_Z)=n^2-3n+3$. 
\end{lem}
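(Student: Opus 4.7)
The plan is to read off $c_2(I_Z)$ by applying the multiplicativity of total Chern classes to the given short exact sequence $0 \to L \to T_{\mathbb{P}^2} \to I_Z \otimes L' \to 0$. The Euler sequence on $\mathbb{P}^2$ gives $c(T_{\mathbb{P}^2}) = 1 + 3H + 3H^2$, and the hypothesis gives $c(L) = 1 + nH$, so the identity $c(T_{\mathbb{P}^2}) = c(L)\cdot c(I_Z\otimes L')$ should determine $c_2(I_Z)$ once I have a handle on $c(I_Z \otimes L')$.

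To prepare the right-hand factor, I would compute the Chern classes of $I_Z \otimes L'$ separately. Because $Z$ is $0$-dimensional, the exact sequence $0 \to I_Z \to \mathcal{O}_{\mathbb{P}^2} \to \mathcal{O}_Z \to 0$ combined with $\operatorname{ch}(\mathcal{O}_Z) = \operatorname{length}(Z)\cdot [\mathrm{pt}]$ gives $c_1(I_Z) = 0$ and $c_2(I_Z) = \operatorname{length}(Z)$ (identifying $A^2(\mathbb{P}^2) \cong \mathbb{Z}$ via the degree map). Writing $c_1(L') = mH$, the tensor-product formula for a rank-$1$ sheaf against a line bundle then yields $c_1(I_Z \otimes L') = mH$ and $c_2(I_Z \otimes L') = c_2(I_Z)$, so $c(I_Z \otimes L') = 1 + mH + c_2(I_Z)H^2$.

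Substituting into the Chern class identity and using $H^3 = 0$ gives
\[
1 + 3H + 3H^2 = (1 + nH)\bigl(1 + mH + c_2(I_Z)H^2\bigr) = 1 + (n+m)H + \bigl(nm + c_2(I_Z)\bigr)H^2.
\]
Matching coefficients yields $m = 3 - n$ and $c_2(I_Z) = 3 - nm = 3 - n(3-n) = n^2 - 3n + 3$.

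The only technical point is that $I_Z$ is not locally free, so the use of the total Chern class and of the tensor rule needs a brief justification. I would handle this by passing to the Grothendieck group $K_0(\mathbb{P}^2)$, where the short exact sequence becomes additivity of the Chern character; the translation back via $\operatorname{ch} = 1 + c_1 + \tfrac{1}{2}(c_1^2 - 2c_2)$ for rank-$1$ torsion-free sheaves is standard and presents no real obstacle.
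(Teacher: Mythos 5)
Your proposal is correct and follows essentially the same route as the paper: both apply the Whitney product formula to the sequence $0\to L\to T_{\mathbb{P}^2}\to I_Z\otimes L'\to 0$, compute $c(T_{\mathbb{P}^2})=1+3H+3H^2$ from the Euler sequence, and match coefficients to get $c_1(L')=(3-n)H$ and $c_2(I_Z)=3-n(3-n)=n^2-3n+3$. The extra care you take in justifying Chern classes of the non-locally-free sheaf $I_Z\otimes L'$ is a welcome refinement but does not change the argument.
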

\begin{proof}
By the Euler sequence $0\rightarrow \mathcal{O}_{\mathbb{P}^2}\rightarrow \mathcal{O}(1)^{\oplus 3} \rightarrow T_{\mathbb{P}^2} \rightarrow 0$, 
we have $c_1(\mathcal{O}_{\mathbb{P}^2}(1)^{\oplus 3})=c_1(\mathcal{O}_{\mathbb{P}^2})+c_1(T_{\mathbb{P}^2})=c_1(T_{\mathbb{P}^2})$ 
and $c_2(\mathcal{O}_{\mathbb{P}^2}(1)^{\oplus 3})=c_2(T_{\mathbb{P}^2})+c_1(\mathcal{O}_{\mathbb{P}^2})c_1(T_{\mathbb{P}^2})=c_2(T_{\mathbb{P}^2})$. 
Thus we have $c_1(T_{\mathbb{P}^2})=3H$ and $c_2(T_{\mathbb{P}^2})=3$. Let $c_1(L)=nH$. Since 
$c_1(T_{\mathbb{P}^2})=c_1(L)+c_1(I_Z\otimes L')=c_1(L)+c_1(L')$, we have $c_1(L')=(3-n)H$. Since 
$c_2(T_X)=c_1(L)c_1(I_Z\otimes L')+c_2(I_Z\otimes L')=c_1(L)c_1(L')+c_2(I_Z)=n(3-n)+c_2(I_Z)$, we see that $c_2(I_Z)=n^2-3n+3$. 
\end{proof}

Note that $n=\deg L\leq 1$, since $T_{\mathbb{P}^2}$ is stable. For any integer $n\leq 1$, we see that $n^2-3n+3\not= 0$. 
For example, if $\deg c_1(L)=1$ {\rm (resp. $0; -1$)}, then $c_2(I_Z)=1$ {\rm (resp. 3; 7)}. Thus we have the following:

\begin{crl}[Bloch, Ganong-Russell\cite{GR}]
The F-sandwiches of $\mathbb{P}^2$ of degree $p$ are singular. 
\end{crl}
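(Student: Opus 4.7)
The plan is to read off the corollary directly from Lemma~\ref{1} together with the exact sequence
$0\to L\to T_{\mathbb{P}^2}\to I_Z\otimes L'\to 0$ introduced just before the lemma, in which the support of $Z$ (as a topological subset of $\mathbb{P}^2$) coincides with the preimage of the singular locus of $Y$. So to exhibit a singular point on $Y$ it suffices to show that $Z$ is non-empty, and this will follow by a second Chern class computation.

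First I would recall that an $F$-sandwich $Y$ of $\mathbb{P}^2$ of degree $p$ corresponds to an invertible $1$-foliation $L\subset T_{\mathbb{P}^2}$, and write $c_1(L)=nH$ with $n=\deg L$. Since $T_{\mathbb{P}^2}$ is stable, any invertible subsheaf $L$ of $T_{\mathbb{P}^2}$ satisfies $\deg L<\tfrac{1}{2}\deg T_{\mathbb{P}^2}=\tfrac{3}{2}$, hence $n\le 1$. Lemma~\ref{1} then gives $c_2(I_Z)=n^2-3n+3$.

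Next I would observe that for every integer $n\le 1$ one has $n^2-3n+3>0$; indeed the quadratic $t^2-3t+3$ has discriminant $9-12=-3<0$, so it is strictly positive on all of $\mathbb{R}$. Because $Z$ is a $0$-dimensional subscheme of $\mathbb{P}^2$, the number $c_2(I_Z)$ equals the length of $Z$, and positivity of this length forces $Z\ne\emptyset$.

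Finally, since the topological image of $Z$ in $Y$ is exactly the singular locus of $Y$, non-emptiness of $Z$ implies that $Y$ has at least one singular point, which is the statement of the corollary. There is no real obstacle here beyond noting the two inputs---stability of $T_{\mathbb{P}^2}$ to bound $n$, and the sign-free fact that $t^2-3t+3$ has no real zero---so the argument is essentially a one-line consequence of Lemma~\ref{1}; as a sanity check one may verify the values $c_2(I_Z)=1,3,7$ for $n=1,0,-1$, which are exactly the cases realized by the examples in Section~2.
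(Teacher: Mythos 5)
Your argument is correct and is essentially the paper's own proof: both deduce from Lemma~\ref{1} that $c_2(I_Z)=n^2-3n+3$ never vanishes (the paper checks this for integers $n\le 1$, you note the discriminant of $t^2-3t+3$ is negative), hence $Z\neq\emptyset$ and $Y$ is singular. The only cosmetic difference is that you make explicit the identification of $c_2(I_Z)$ with the length of $Z$, which the paper leaves implicit.
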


Note that this Corollary holds in any positive characteristic. Indeed, our proof of Lemma~\ref{1} does not use the 
assumption that $p=2$. 

Does there exist the case where $Y$ has $m$ singular points for every $m$ such that $1\leq m\leq n^2-3n+3$? In particular, 
does there exist the case where $Y$ has only one or $n^2-3n+3$ singular points? In the following, we classify the configurations 
of the singular points of $Y$ and give the answers of these questions in the case where $n=-1$. 

\begin{lem}\label{lem-of-der}
Let $L$ be an invertible $1$-foliation of $\mathbb{P}^2$ such that $\deg L\geq -1$. After suitable coordinate changes, the 
corresponding non-zero $p$-closed rational vector field is equivalent to $\delta=F\partial_x+G\partial_y$ such that
\begin{itemize}
\item[(i)] $F=a_{30}x^3+a_{12}xy^2+a_{20}x^2 + a_{02}y^2 + a_{10}x$ and $G=a_{30}x^3+a_{12}xy^2+a_{20}x^2 + a_{02}y^2 + a_{10}x$ for some $a_{ij}, b_{ij}\in k$, 
\item[(ii)] $F\not=0$ and $G\not=0$, and 
\item[(iii)] $F$ and $G$ have no common factors. 
\end{itemize}
\end{lem}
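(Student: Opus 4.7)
I will reduce $\delta$ to the stated form in three steps: a projective change of coordinates that places a singular point of the $1$-foliation at the origin of $U_0$, an application of $p$-closedness, and a degree bound extracted from $\deg L \geq -1$. By Lemma~\ref{1}, the singular subscheme $Z \subset \mathbb{P}^2$ has length $(\deg L)^2 - 3\deg L + 3 \geq 1$ and is therefore nonempty; pick a closed point $P \in Z$, apply a projective transformation sending $P$ to $(1:0:0)$, and scale the resulting rational vector field by a suitable element of $K(\mathbb{P}^2)^{\times}$ so that on the new $U_0$ one has $\delta = F\partial_x + G\partial_y$ with $F, G \in k[x, y]$ and $\gcd(F, G) = 1$. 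Since the new origin lies in $V(F, G)$ by construction, neither $F$ nor $G$ has a constant term. Condition (iii) is built in by our normalization, and (ii) follows because $F = 0$ would make $G$ coprime to $0$ and hence a unit, contradicting $G(0, 0) = 0$.

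For the $k[x, y^2]$--$k[x^2, y]$ structure in (i), I use $p$-closedness. In characteristic $2$ we have $\partial_x^2 = \partial_y^2 = 0$ on $k[x, y]$, so a direct computation gives $\delta^2 = \delta(F)\partial_x + \delta(G)\partial_y$. The condition $\delta^2 = h\delta$ thus reads $\delta(F) = hF$ and $\delta(G) = hG$; combined with $\gcd(F, G) = 1$ this forces $F \mid \partial_y F$ and $G \mid \partial_x G$. Since $\partial_y F$ has strictly smaller $y$-degree than $F$ whenever it is nonzero, we conclude $\partial_y F = 0$, and symmetrically $\partial_x G = 0$. In characteristic $2$ these are precisely $F \in k[x, y^2]$ and $G \in k[x^2, y]$.

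Finally I bound $d = \max(\deg F, \deg G)$. Setting $\tilde F(z, w) = z^d F(1/z, w/z)$ and $\tilde G(z, w) = z^d G(1/z, w/z)$, on $U_1$ I obtain
\[
\delta = z^{1-d}\bigl(-z\tilde F\,\partial_z + (\tilde G - w\tilde F)\,\partial_w\bigr).
\]
Writing the gcd of the two coefficients as $z^k$ times a constant---the non-$z$ part of the gcd must be constant, since $\delta$ is already normalized on $U_0$ and hence contributes no divisor on $U_0 \cap U_1 = \{z \neq 0\}$---one obtains $\deg L = 1 - d + k$. When $\deg F = d$, the nonvanishing of $\tilde F(0, w)$ makes $-z\tilde F$ have $z$-order exactly $1$, so $k \leq 1$; when $\deg F < \deg G = d$, the nonvanishing of $\tilde G(0, w)$ forces the $z$-order of $\tilde G - w\tilde F$ to be $0$, so $k = 0$. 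Either way $\deg L \leq 2 - d$, and $\deg L \geq -1$ then forces $d \leq 3$. This leaves exactly the monomials listed in (i). The most delicate step is this degree bound, which hinges on the observation that the gcd on $U_1$ can acquire at most one factor of $z$.
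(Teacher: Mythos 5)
Your overall strategy is the same as the paper's (normalize at a point of the singular scheme $Z$, then combine $p$-closedness with the pole order along the line at infinity), and the individual computations you make are correct, but the proof stops short of what the lemma actually asserts. The displayed condition (i) contains a typo; as the paper's proof and the use of the lemma in Theorem~\ref{main} make clear, the intended normal form is
\[
F=a_{30}x^3+a_{12}xy^2+a_{20}x^2+a_{02}y^2+a_{10}x,\qquad
G=a_{30}x^2y+a_{12}y^3+b_{20}x^2+b_{02}y^2+a_{10}y,
\]
in which three coefficients of $G$ are \emph{equal to} coefficients of $F$. Your argument only pins down the monomial supports: you correctly obtain $F\in k[x,y^2]$, $G\in k[x^2,y]$, no constant terms, and total degree at most $3$, which leaves $G=c_{21}x^2y+c_{03}y^3+b_{20}x^2+b_{02}y^2+b_{01}y$ with $c_{21},c_{03},b_{01}$ a priori unrelated to $a_{30},a_{12},a_{10}$. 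These identities are not cosmetic: the seven-parameter family and the whole case analysis of Theorem~\ref{main} depend on them.

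Two repairs are needed. First, when $d=3$ your own formula $\deg L=1-d+k$ together with $\deg L\ge -1$ forces $k\ge 1$, i.e.\ $z\mid \tilde{G}-w\tilde{F}$, i.e.\ the top-degree forms satisfy $xG_3=yF_3$; spelling this out gives $c_{21}=a_{30}$ and $c_{03}=a_{12}$. You proved only $k\le 1$ and used it as an upper bound for $\deg L$; you must also read the inequality in the other direction to extract these relations (the paper does this via its second expansion on $U_1$). Second, and independently, you used only half of $p$-closedness. From $\delta^2=\delta(F)\partial_x+\delta(G)\partial_y=h\delta$ you extracted $F\mid F_y$ and $G\mid G_x$, whence $F_y=G_x=0$; but once $F_y=G_x=0$ the same identity reads $FF_x=hF$ and $GG_y=hG$, so $F_x=h=G_y$, which yields $a_{10}=b_{01}$ (and re-derives the cubic relations). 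Without this step, fields such as $x\partial_x+cy\partial_y$ with $c\notin\mathbb{F}_2$ --- which are not $p$-closed in characteristic $2$ --- would pass your argument. That said, your uniform treatment of the pole order via $d=\max(\deg F,\deg G)$ on the single chart $U_1$ is a genuine simplification of the paper's two-chart computation and is worth keeping once the two missing deductions are added.
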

\begin{proof}
In what follows, $f_x$ (resp. $f_y$) means $\partial f/\partial x$ (resp. $\partial f/\partial y$) and $\deg_x f$ (resp. $\deg_y f$) 
stands for the degree of $f$ with respect to the variable $x$ (resp. $y$) for $f\in k[x,y]$. 

Since there exists at least one singular point on the corresponding $F$-sandwich $Y=\mathbb{P}^2/\delta$, we 
may assume that a singular point lies on the image of the origin of $U_0$ after suitable coordinate changes. Then, by multiplying 
a suitable rational function, we may assume that $\delta=F\partial_x+G\partial_y$ where $F=\sum_{1\leq i,j}a_{ij}x^iy^j$ and 
$G=\sum_{1\leq i,j}b_{ij}x^iy^j$ for some $a_{ij}, b_{ij}\in k$. Since $\delta\not=0$, we have $F\not=0$ or $G\not=0$. If $F=0$ and 
$G\not=0$ (resp. $F\not= 0$ and $G=0$), then $\delta$ is equivalent to $\partial_y$ (resp. $\partial_x$), from which it follows 
that $Y$ is smooth at the image of the origin of $U_0$. Thus $F\not=0$ and $G\not=0$, which is the condition (ii). By multiplying 
a suitable rational function, we may assume also that $F$ and $G$ have no common factors, which is the condition (iii). 

To see that the condition (i) holds, we first show that $F=a_{30}x^3+ a_{21}x^2y+a_{12}xy^2+a_{20}x^2 + a_{11}xy + a_{02}y^2 + a_{10}x + a_{01}y$ 
and $G=a_{30}x^2y+a_{21}xy^2+a_{12}y^3+b_{20}x^2 + b_{11}xy + b_{02}y^2 + b_{10}x + b_{01}y$. By expressing $\delta$ for the local 
coordinates $z$ and $w$, we have 
\begin{eqnarray*}
\delta &=& \sum a_{ij} \frac{1}{z^i} \frac{w^j}{z^j} \left( - z^2 \partial_z - zw \partial_w \right)
 + \sum b_{mn} \frac{1}{z^m} \frac{w^n}{z^n} z \partial_w 
 = - \sum a_{ij} \frac{w^j}{z^{i+j-2}} \partial_z + \widetilde{G} \partial_w, 
\end{eqnarray*}
where $\widetilde{G}\in k\left[z, w, 1/z\right]$. Since $F$ and $G$ have no common factors, we see that 
$\deg L=\deg \mathcal{O}_X(\divisor\,\delta)$ is estimated by the common factor $1/z^k$ of the coefficients of $\partial_z$ and 
$\partial_w$. Since $\deg L \geq -1$, we have $a_{ij}=0$ for $i+j \geq 4$. In the same way, by expressing $\delta$ for the local 
coordinates $u$ and $v$, we see that $b_{mn}=0$ for $m+n \geq 4$. Thus we have
\begin{eqnarray*}
\delta &=& \sum_{1 \leq i+j \leq 3} a_{ij}x^iy^j \partial_x + \sum_{1 \leq m+n \leq 3} b_{mn}x^my^n \partial_y \\
&=& (a_{30}/z+ a_{21}w/z+a_{12}w^2/z+a_{03}w^3/z +a_{20} + a_{11} w + a_{02} w^2 + a_{10} z + a_{01} zw) \partial_z \\
& & + (b_{30}/z^2+ (a_{30}+b_{21})w/z^2+(a_{21}+b_{12})w^2/z^2+(a_{12}+b_{03})w^3/z^3 + a_{03}w^4/z^2 \\
& & +b_{20}/z + (a_{20}+b_{11})w/z + (a_{11}+b_{02})w^2/z + a_{02}w^3/z 
+ b_{10} + (a_{10}+b_{01}) w +a_{01} w^2) \partial_w. 
\end{eqnarray*}
Since $\deg L \geq -1$, we see that $b_{30}=0$, $b_{21}=a_{30}$, $b_{12}=a_{21}$, $b_{03}=a_{12}$ and $a_{03}=0$. Therefore 
$F=a_{30}x^3+ a_{21}x^2y+a_{12}xy^2+a_{20}x^2 + a_{11}xy + a_{02}y^2 + a_{10}x + a_{01}y$ and 
$G=a_{30}x^2y+a_{21}xy^2+a_{12}y^3+b_{20}x^2 + b_{11}xy + b_{02}y^2 + b_{10}x + b_{01}y$. 

We next show that $F=a_{30}x^3+a_{12}xy^2+a_{20}x^2 + a_{02}y^2 + a_{10}x$ and $G=a_{30}x^2y+a_{12}y^3+b_{20}x^2+ b_{02}y^2+ a_{10}y$. 
Suppose that $\delta^2=H\delta$, where $H\in k[x,y]$. Since $\delta^2=(FF_x+GF_y) \partial_x+(FG_x+GG_y)\partial_y$, we have 
$FF_x+GF_y=HF$ and $FG_x+GG_y=HG$. Assume that $F_y\not=0$. Since $F$ and $G$ have no common factors and $GF_y=F(F_x+H)$, 
we see that $F_y=IF$ for some $I\in k[x,y]$. Since $\deg_x F\leq \deg_xF_y=\deg_x(a_{21}x^2+a_{11}x+a_{01})=2$ and 
$\deg_y F\leq \deg_y F_y=\deg_y(a_{21}x^2+a_{11}x+a_{01})=0$, we have $F=a_{20}x^2 + a_{10}x$. Then $F_y=0$, which contradicts to the 
assumption that $F_y\not=0$. Thus $F_y=a_{21}x^2+a_{11}x+a_{01}=0$. In the same way, we see that $G_x=a_{21}y^2+b_{11}y+b_{10}=0$, 
so that $a_{21}=a_{11}=a_{01}=b_{11}=b_{10}=0$. Now, we have 
$\delta^2=FF_x \partial_x+GG_y\partial_y=(a_{30}x^2+a_{12}y^2+a_{10})F\partial_x+(a_{30}x^2+a_{12}y^2+b_{01})G\partial_y$. Since $\delta$ 
is $p$-closed, we also see that $a_{10}=b_{01}$. Therefore $F=a_{30}x^3+a_{12}xy^2+a_{20}x^2 + a_{02}y^2 + a_{10}x$ and 
$G=a_{30}x^2y+a_{12}y^3+b_{20}x^2+ b_{02}y^2+ a_{10}y$. 
\end{proof}

\begin{thm}\label{main}
Let $Y$ be the quotient of $\mathbb{P}^2$ by an invertible $1$-foliation $L$. 
\begin{enumerate}[\normalfont \rmfamily (1)]
\item If $\deg L=1$, then the configuration of the singular points of $Y$ coincides with the type $A_1$ after suitable coordinate changes. 
\item An invertible $1$-foliation $L$ of degree $0$ dose not exist. 
\item If $\deg L=-1$, then the configuration of the singular points of $Y$ coincides with the type $7A_1$, $D_4^0+3A_1$, $D_6^0+A_1$ or $E_7^0$. 
\end{enumerate}
\end{thm}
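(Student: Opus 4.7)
The plan is to combine the length count of Lemma~\ref{1} with the normal form of Lemma~\ref{lem-of-der}, and then carry out a case analysis on the remaining coefficients. By Lemma~\ref{1}, the zero-dimensional scheme $Z \subset \mathbb{P}^2$ has length $c_2(I_Z) = n^2 - 3n + 3$, namely $1$, $3$, $7$ for $n = 1, 0, -1$. Using the explicit local ideals $(F,G)$ in Example~\ref{example-2}, a singularity of type $A_1$ contributes length $1$, a $D_{2m}^0$ contributes length $2m$, an $E_7^0$ contributes length $7$, and an $E_8^0$ contributes length $8$. Hence the only partitions of $7$ into such contributions are $7A_1$, $D_4^0 + 3A_1$, $D_6^0 + A_1$, and $E_7^0$, giving the four types appearing in item (3). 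Similarly, only $A_1$ is possible for item (1), and $3A_1$ or one $D_6^0$ for item (2).

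For part (1), a single $A_1$ singularity is forced. After conjugating by a projective change of coordinates placing it at the origin of $U_0$, Lemma~\ref{lem-of-der} provides the shape of $\delta$. The stronger condition $\deg L = 1$ forces the cubic and quadratic coefficients to vanish as well --- one re-runs the $z$- and $u$-chart expansions appearing in the proof of Lemma~\ref{lem-of-der} with the tighter bound $\deg L \ge 1$ in place of $\deg L \ge -1$ --- leaving $\delta = a_{10}(x \partial_x + y \partial_y)$. This matches Example~\ref{example-2}(1) and yields type $A_1$. For part (2), the same chart expansion under $\deg L = 0$ eliminates the cubic part but demands a nonzero quadratic part; combining this with the $p$-closedness relations of Lemma~\ref{lem-of-der} (namely $a_{21} = a_{11} = a_{01} = b_{11} = b_{10} = 0$ and $a_{10} = b_{01}$) and the analogous relations forced in the other two charts produces an inconsistent linear system, so no invertible $1$-foliation of degree $0$ exists.

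For part (3), Example~\ref{example-2}(2)--(6) already realizes each of the four listed configurations, so the issue is the converse: any $\delta$ with $\deg L = -1$ must be projectively equivalent to one of those representatives. Starting from the normal form of Lemma~\ref{lem-of-der}, I would break into cases according to the cubic part, which factors as $F_3 = x(a_{30}x^2 + a_{12}y^2)$ and $G_3 = y(a_{30}x^2 + a_{12}y^2)$; the common quadratic factor $a_{30}x^2 + a_{12}y^2$ governs the singular points lying on the lines $z = 0$ and $u = 0$, while the lower-order part of $\delta$ governs the singularity in $U_0$. A sub-case analysis on whether $a_{30}$, $a_{12}$, and the discriminant-like combinations with the quadratic coefficients $a_{20}, a_{02}, b_{20}, b_{02}$ vanish, followed by elementary linear coordinate changes within each stratum (using the residual action of the projective automorphisms that fix the normal form), should pin down which of the four configurations arises.

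The main obstacle will be completeness of the case split in (3): the length-$7$ constraint must be used not only to rule out $E_8^0$ but also to exclude hybrids such as $2D_4^0$ or $D_4^0 + A_1 + \text{(isolated)}$ with wrong total length, and in each surviving stratum one must exhibit a concrete sequence of projective changes of coordinates carrying $\delta$ to an example form while preserving coprimality of $F,G$ and the $p$-closedness relation $a_{10} = b_{01}$. Part (2) is delicate in the same way, since it requires proving \emph{nonexistence} rather than exhibiting an example; I expect the argument to reduce to the observation that the quadratic-part system compatible with $\deg L = 0$ forces $F$ and $G$ to share a common factor, violating condition (iii) of Lemma~\ref{lem-of-der}.
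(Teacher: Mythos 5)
Your proposal is a plan rather than a proof, and it rests on a key step that is neither proved by you nor used by the paper. You want to deduce the list of configurations in (3) from the numerical constraint $c_2(I_Z)=7$ of Lemma~\ref{1} together with the assertion that an $A_1$ contributes local length $1$, a $D_{2m}^0$ length $2m$, and an $E_7^0$ length $7$. This presupposes two unproved facts: first, that the only singularities that can occur on a quotient of a smooth surface by a $1$-foliation in characteristic $2$ are those appearing in Example~2.1 (a priori nothing excludes, say, $A_n$ with $n\ge 2$, $D_n^r$ with $r>0$, $E_6$, $E_8^r$, or non-rational singularities, each of which would change the admissible partitions of $7$); and second, that the colength of the local ideal $(F,G)$ at a zero of $\delta$ always equals the number of exceptional curves of the resulting singularity. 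You only verify the latter on the examples, and in characteristic $p$ such a Milnor-number identity is not automatic. A symptom of the fragility of this bookkeeping is your claim that length $3$ allows ``$3A_1$ or one $D_6^0$'': by your own accounting $D_6^0$ has length $6$. The paper never argues this way; Lemma~\ref{1} is used only to conclude that $Y$ is singular, and the configurations are obtained by explicitly computing the invariant rings $k[x,y]^{\delta}$, $k[z,w]^{\delta}$, $k[u,v]^{\delta}$ in every stratum of the coefficient space.

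The remaining parts are likewise sketches where the substance should be. For (2), you assert that the conditions for $\deg L=0$ ``produce an inconsistent linear system'' or force a common factor of $F$ and $G$, but you never exhibit either; in the paper the nonexistence falls out because the exhaustive case analysis shows $\deg L\in\{1,-1\}$ in every case consistent with conditions (ii) and (iii) of Lemma~\ref{lem-of-der}. For (3), the entire case analysis --- organized in the paper not by the cubic part of $\delta$ but by whether the singular points can be moved to the origins of one, two, or three of the standard charts (cases (A), (B), (C)), and occupying the bulk of the proof --- is deferred with ``should pin down which of the four configurations arises,'' and you yourself flag its completeness as the main obstacle. In short, you have correctly identified the ingredients (Lemma~\ref{1}, Lemma~\ref{lem-of-der}, Example~\ref{example-2}) and the realizability direction, but the exclusion of all other configurations --- which is the theorem --- is not established by your argument.
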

\begin{proof}
Let $\delta \in \Der_k\,K(\mathbb{P}^2)$ be the non-zero $p$-closed rational vector field corresponding to $L$. By Lemma~\ref{lem-of-der} 
we may assume that $\delta=F\partial_x+G\partial_y$ such that 
\begin{itemize}
\item[(i)] $F=a_{30}x^3+a_{12}xy^2+a_{20}x^2 + a_{02}y^2 + a_{10}x$ and $G=a_{30}x^3+a_{12}xy^2+a_{20}x^2 + a_{02}y^2 + a_{10}x$ for some $a_{ij}, b_{ij}\in k$, 
\item[(ii)] $F\not=0$ and $G\not=0$, and 
\item[(iii)] $F$ and $G$ have no common factors. 
\end{itemize}
We have
\begin{eqnarray*}
\delta &=& (a_{30}x^3+a_{12}xy^2+a_{20}x^2  + a_{02}y^2 + a_{10}x ) \partial_x + (a_{30}x^2y+a_{12}y^3+b_{20}x^2  + b_{02}y^2 + a_{10}y) \partial_y \\
&=& \dfrac{1}{z}((a_{02} zw^2+a_{10} z^2 +a_{12}w^2+a_{20}z+a_{30}) \partial_z + ( a_{02}w^3+ b_{02}w^2+ a_{20}w+b_{20}) \partial_w) \\
&=& \dfrac{1}{u}((b_{20} uv^2 +a_{10} u^2+a_{30}v^2+b_{02}u+a_{12}) \partial_u +( b_{20}v^3+ a_{20} v^2 + b_{02} v + a_{02}) \partial_v). 
\end{eqnarray*}
We divide proof into the three cases: 
\begin{itemize}
\item[(A)] One can settle the singular points of $Y$ on the images of the origins of $U_0$, $U_1$ and $U_2$ by suitable coordinate changes of $\mathbb{P}^2$. 
\item[(B)] One can settle the singular points of $Y$ on the images of the origins of $U_0$ and $U_1$ by suitable coordinate changes of $\mathbb{P}^2$ 
and cannot put the singular points as the case (A). 
\item[(C)] One can settle the singular point of $Y$ on the image of the origin of $U_0$ by suitable coordinate changes 
and cannot put the singular points as the case (A) and (B), i.e., there exists the only one singular point on $Y$. 
\end{itemize} 
In what follows, 
\begin{itemize}
\item the logical connectives $\wedge$ (and) and $\vee$ (or) are used to list up multiple conditions, 
\item $\pi_{\delta}$ is abbreviated to $\pi$, and 
\item $\sqrt{\alpha}$ stands for the unique root of the equation $x^2+\alpha=0$ for $\alpha\in k$. 
\end{itemize}

\noindent(A) We settle the singular points on the images of the origins of $U_0$, $U_1$ and $U_2$ by suitable coordinate changes. 
Then we may assume that $a_{30}=b_{20}=a_{12}=a_{02}=0$, so that 
\begin{eqnarray*}
\delta &=& (a_{20}x^2+ a_{10}x ) \partial_x + ( b_{02}y^2 + a_{10}y) \partial_y \\
&=& \dfrac{1}{z}((a_{10} z^2+a_{20}z) \partial_z + ( b_{02} w^2+a_{20} w) \partial_w) = \dfrac{1}{u}(( a_{10} u^2+b_{02}u ) \partial_u +( a_{20} v^2 +b_{02} v) \partial_v).
\end{eqnarray*} 
We consider the eight cases such that each coefficient $a_{20}$, $a_{10}$ and $b_{02}$ is zero or not. The following table 
summarizes what we know. In the table, $0$ in the column of $a_{20}$ (resp. $a_{10}$; $b_{02}$) mean that $a_{20}=0$ (resp. 
$a_{10}=0$; $b_{02}=0$). On the other hand, $k^{\ast}$ in the column of $a_{20}$ (resp. $a_{10}$; $b_{02}$) mean that 
$a_{20}\not=0$ (resp. $a_{10}\not=0$; $b_{02}\not=0$). We use this notation also in the tables which will come out later.\\
$$\begin{array}{c|c|c|c|c|c} 
 & a_{20} & a_{10} & b_{02} & \deg L & \\ \hline 
(\heartsuit) & 0 & 0 & 0& & F=G=0.\\ \hline
(\heartsuit) & 0 & 0 & k^{\ast} & & F=0.\\ \hline
(1) & 0 & k^{\ast} & 0& 1 & A_1 \\ \hline
(2) & 0 & k^{\ast} & k^{\ast} & -1 & D_4^0+3A_1\\ \hline
(\heartsuit) & k^{\ast} & 0 & 0& & G=0.\\ \hline
(3) & k^{\ast} & 0 & k^{\ast} & -1 & D_4^0+3A_1\\ \hline
(4) & k^{\ast} & k^{\ast} & 0& -1 & D_4^0+3A_1\\ \hline
(5) & k^{\ast} & k^{\ast} & k^{\ast} & -1 & 7A_3
\end{array}$$
The cases ($\heartsuit$) contradict to the assumption (ii). \\
(1) $\delta = a_{10}x  \partial_x + a_{10}y \partial_y$: By multiplying $a_{10}^{-1}$, we may assume that 
$$\delta = x  \partial_x + y \partial_y= z  \partial_z= u \partial_u.$$
This is the case (1) in Example~\ref{example-2}, so that $\deg L=1$ and the configuration of the singular points of $Y$ is the 
type $A_1$. \\
(2) $\delta = a_{10}x  \partial_x + ( b_{02}y^2 + a_{10}y) \partial_y$: 
By multiplying $a_{10}^{-1}$, we may assume that 
$$\delta = x \partial_x + y( b_{02}y + 1) \partial_y=\dfrac{1}{z}(z^2\partial_z+b_{02}w^2\partial_w)=\dfrac{1}{u}(u(u+b_{02})\partial_u+b_{02}v\partial_v).$$
This is the case (3) in Example~\ref{example-2}, so that $\deg L=-1$ and the configuration of the singular points of $Y$ is the 
type $D_4^0+3A_1$. \\
(3) $\delta= a_{20}x^2 \partial_x + b_{02}y^2 \partial_y$: By multiplying $a_{20}^{-1}$, we may assume that 
$$\delta = x^2 \partial_x + b_{02}y^2 \partial_y=\dfrac{1}{z}(z\partial_z+w(b_{02}w+1))\partial_w=\dfrac{1}{u}(b_{02}u\partial_u+v(v+b_{02})\partial_v).$$
This is the case (2) in Example~\ref{example-2}, so that $\deg L=-1$ and the configuration of the singular points of $Y$ is the 
type $D_4^0+3A_1$. \\
(4) $\delta=(a_{20}x^2+ a_{10}x ) \partial_x + a_{10}y \partial_y$: By multiplying $a_{20}^{-1}$, we may assume that 
$$\delta =x(x+a_{10}) \partial_x + a_{10}y \partial_y=\dfrac{a_{10}}{z}(z(z+a_{10}^{-1})d_z+a_{10}^{-1}wd_w)=\dfrac{a_{10}}{u}(u^2d_u+a_{10}^{-1}v^2d_v).$$
This is the case (3) in Example~\ref{example-2}, so that $\deg L=-1$ and the configuration of the singular points of $Y$ is the 
type $D_4^0+3A_1$. \\
(5) $\delta=(a_{20}x^2+ a_{10}x ) \partial_x + ( b_{02}y^2 + a_{10}y) \partial_y$: By multiplying $a_{20}^{-1}$, we may assume that 
\begin{eqnarray*}
\delta &=& x(x+a_{10} ) \partial_x + y( b_{02}y + a_{10}) \partial_y\\
 &=&\dfrac{a_{10}}{z}(z(z+a_{10}^{-1})d_z+w(a_{10}^{-1}b_{02}w+a^{-1})d_w)=\dfrac{a_{10}}{u}(u(u+a_{10}^{-1}b_{02})d_u+v(a_{10}^{-1}v+a_{10}^{-1}b_{02})d_v).
\end{eqnarray*}
This is the case (4) in Example~\ref{example-2}, so that $\deg L=-1$ and the configuration of the singular points of $Y$ is the 
type $7A_1$. \\
\noindent(B) We settle the singular points on the images of the origins of $U_0$ and $U_1$ by suitable coordinate changes. 
Then we have $(a_{30}=b_{20}=0)\wedge (a_{12}\not=0 \vee a_{02}\not=0)$ and 
\begin{eqnarray*}
\delta &=& (a_{12}xy^2+a_{20}x^2  + a_{02}y^2 + a_{10}x ) \partial_x + (a_{12}y^3+ b_{02}y^2 + a_{10}y) \partial_y \\
&=& \dfrac{1}{z}((a_{02} zw^2+a_{10} z^2 +a_{12}w^2+a_{20}z) \partial_z + ( a_{02}w^3+ b_{02}w^2+ a_{20}w) \partial_w) \\
&=& \dfrac{1}{u}((a_{10} u^2+b_{02}u+a_{12}) \partial_u +(a_{20} v^2 + b_{02} v + a_{02}) \partial_v). 
\end{eqnarray*}
\noindent(B-1) Suppose that $a_{12}=0$. We have $a_{02}\not=0$. By multiplying $a_{02}^{-1}$, we may assume that 
\begin{eqnarray*}
\delta &=& (a_{20}x^2  + y^2 + a_{10}x ) \partial_x + ( b_{02}y^2 + a_{10}y) \partial_y \\
 &=& \dfrac{1}{z}((a_{10} z^2 +  zw^2 + a_{20}z ) \partial_z + (  w^3 + b_{02}w^2 + a_{20}w ) \partial_w)\\
 &=&  \dfrac{1}{u}((a_{10} u^2 + b_{02}u) \partial_u +(  a_{20} v^2 + b_{02} v + 1) \partial_v). 
\end{eqnarray*}
We consider the eight cases such that each coefficient $a_{20}$, $a_{10}$ and $b_{02}$ is zero or not. The following table summarizes 
what we know. 
$$\begin{array}{c|c|c|c|c|c} 
 & a_{20} & a_{10} & b_{02} & \deg L & \\ \hline 
(\heartsuit) & 0 & 0 & 0 & & G=0 \\ \hline
(\clubsuit) & 0 & 0 & k^{\ast} & & \delta = y^2 \partial_x + b_{02}y^2 \partial_y \\ \hline
(1) & 0 & k^{\ast} & 0 & -1 & D_6^0+A_1\hspace{1mm} \\ \hline
(2) & 0 & k^{\ast} & k^{\ast} & & (A) \\ \hline
(\heartsuit) & k^{\ast} & 0 & 0 & & G=0 \\ \hline
(3) & k^{\ast} & 0 & k^{\ast} & & (A) \\ \hline
(4) & k^{\ast} & k^{\ast} & 0 & & (A) \\ \hline
(5) & k^{\ast} & k^{\ast} & k^{\ast} & & (A) 
\end{array}$$
The cases ($\heartsuit$) and ($\clubsuit$) contradict to the assumption (ii) and (iii), respectively. \\
(1) We have
$$\delta =( y^2 + a_{10}x )\partial_x + a_{10}y\partial_y = \dfrac{1}{z}(z(w^2+a_{10}z)\partial_z+w^3\partial_w) = \dfrac{1}{u}(a_{10}u^2\partial_u+\partial_v).$$
We see that $\deg L=-1$ and $Y$ is smooth on $\pi(U_2)$. On $\pi(U_0)$, we have $k[x^2, y^2, y^3+a_{10}xy]= k[x^2, y^2, (x+a_{10}y^2)y] 
\cong k[x^2, y^2, xy] \cong k[X,Y,Z]/(Z^2+XY)$. On $\pi(U_1)$, we have $k[z^2, w^2, zw^3+az^2w]\cong k[z^2, w^2, z^2w+zw^3]
\cong k[X,Y,Z]/(Z^2+X^2Y+XY^3)$. Thus the configuration of the singular points of $Y$ is the type $D_6^0+A_1$. \\
(2) $\delta = (y^2 + a_{10}x) \partial_x + y(b_{02}y + a_{10}) \partial_y$: Since 
$F(0,0)=G(0,0)=F(a_{10}b_{02}^{-2}, a_{10}b_{02}^{-1})=G(a_{10}b_{02}^{-2}, a_{10}b_{02}^{-1})=0$, we see that $Y$ has singular points at the 
images of the origin and $(a_{10}b_{02}^{-2}, a_{10}b_{02}^{-1})$ on $U_0$. Since $(a_{10}b_{02}^{-2}, a_{10}b_{02}^{-1})$ does not lie on the 
line defined by $X_2=0$, we can settle the singular points on the images of the origins of $U_0$, $U_1$ and $U_2$ by suitable coordinate 
changes. This is the case (A). \\
(3) We have
\begin{eqnarray*}
\delta &=& (a_{20}x^2 + y^2 ) \partial_x  + b_{02}y^2 \partial_y\\
&=& \dfrac{1}{z}(z(w^2+a_{20}) \partial_z + w(w^2+b_{02}w+a_{20}) \partial_w)= \dfrac{1}{u}(b_{02}u \partial_u + (a_{20}v^2 + b_{02}v + 1) \partial_v).
\end{eqnarray*}
Let $F_1=z(w^2+a_{20})$ and $G_1=w(w^2+b_{02}w+a_{20})$. Since $(w^2+b_{02}w+a_{20})|_{w=0}=a_{20}\not=0$, the equation $w^2+b_{02}w+a_{20}=0$ 
has a non-zero root $w=\alpha$. Since $F_1(0,0)=G_1(0,0)=F_1(0,\alpha)=G_1(0,\alpha)=0$, we see that $Y$ has singular points at the 
images of the origin and $(0,\alpha)$ on $U_1$. Since $(0,\alpha)$ lies on the line defined by $X_0=0$, we can settle the singular points on the 
images of the origins of $U_0$, $U_1$ and $U_2$ by suitable coordinate changes. This is the case (A). \\
(4) We have 
\begin{eqnarray*}
\delta &=& (a_{20}x^2 + y^2 + a_{10}x ) \partial_x + a_{10}y \partial_y \\
&=& \dfrac{1}{z}(z(w^2 +a_{10}z+ a_{20} ) \partial_z + w(w^2+a_{20}) \partial_w)= \dfrac{1}{u}(a_{10} u^2 \partial_u +(a_{20}v^2 + 1) \partial_v). 
\end{eqnarray*}
Let $F_1=z(w^2 +a_{10}z+ a_{20})$ and $G_1=w(w^2+a_{20})$. Since $F_1(0,0)=G_1(0,0)=F_1(0,\sqrt{a_{20}})=G_1(0,\sqrt{a_{20}})=0$, we see that 
$Y$ has singular points at the images of the origin and $(0,\sqrt{a_{20}})$ on $U_1$. Since $(0,\sqrt{a_{20}})$ lies on the line defined by $X_0=0$, 
we can settle the singular points on the images of the origins of $U_0$, $U_1$ and $U_2$ by suitable coordinate changes. This is the case (A). \\
(5) $\delta=(a_{20}x^2 + y^2 + a_{10}x ) \partial_x + y(b_{02}y + a_{10}) \partial_y$: We see that $y=a_{10}b_{02}^{-1}$ is a root of the equation 
$y(b_{02}y + a_{10})=0$. Since $a_{10}^2b_{02}^{-2}\not=0$, the equation $F(x, a_{10}b_{02}^{-1})=a_{20}x^2+a_{10}x+a_{10}^2b_{02}^{-2}=0$ has a 
non-zero root $x=\alpha$. Since $F(0,0)=G(0,0)=F(\alpha,a_{10}b_{02}^{-1})=G(\alpha,a_{10}b_{02}^{-1})=0$, we see that $Y$ has singular points 
at the images of the origin and $(\alpha,a_{10}b_{20}^{-1})$ on $U_0$. Since $(\alpha,a_{10}b_{20}^{-1})$ does not lie on the line defined by $X_2=0$, 
we can settle the singular points on the images of the origins of $U_0$, $U_1$ and $U_2$ by suitable coordinate changes. This is the case (A). \\
\noindent(B-2) Suppose that $a_{12}\not=0$. By multiplying $a_{12}^{-1}$, we may assume that 
\begin{eqnarray*}
\delta &=& (xy^2+a_{20}x^2  + a_{02}y^2 + a_{10}x ) \partial_x  + (y^3  + b_{02}y^2 + a_{10}y) \partial_y \\
&=& \dfrac{1}{z}(\left(w^2 +a_{20}z  + a_{02} zw^2 + a_{10} z^2 \right) \partial_z + \left(  a_{20} w + b_{02}w^2 + a_{02}w^3 \right) \partial_w) \\
&=& \dfrac{1}{u}(\left( 1 + b_{02}u  + a_{10} u^2\right) \partial_u +\left( a_{20}v^2 +b_{02}v + a_{02} \right) \partial_v).
\end{eqnarray*}
We consider the sixteen cases such that each coefficient $a_{20}$, $a_{02}$, $a_{10}$ and $b_{02}$ is zero or not. The following table 
summarizes what we know. 
$$\begin{array}{c|c|c|c|c|c|c} 
 & a_{20} & a_{02} & a_{10} & b_{02} & \deg L & \\ \hline 
(\clubsuit) & 0 &0 &0 &0 & & \delta=xy^2 \partial_x  + y^3 \partial_y \\ \hline 
(\clubsuit) & 0 &0 &0 & k^{\ast} & & \delta=xy^2 \partial_x  + y^2(y + b_{02}) \partial_y \\ \hline
(\clubsuit) & 0 &0 & k^{\ast} &0 & & \delta=x(y^2+ a_{10} ) \partial_x  + y(y^2 + a_{10}) \partial_y \\ \hline
(1) & 0 &0 & k^{\ast} & k^{\ast} & & (A) \\ \hline
(\clubsuit) & 0 & k^{\ast} &0 &0 & & \delta=y^2(x + a_{02}) \partial_x  + y^3  \partial_y\\ \hline
(\clubsuit) & 0 & k^{\ast} &0 &k^{\ast} & & \delta=y^2(x + a_{02}) \partial_x  + y^2(y + b_{02}) \partial_y\\ \hline
(2) & 0 & k^{\ast} & k^{\ast} &0 & -1 & D_6^0+A_1 \\ \hline
(3) & 0 & k^{\ast} & k^{\ast} & k^{\ast} & & (A) \\ \hline
(4) & k^{\ast} &0 &0 &0 & -1 & D_6^0+A_1 \\ \hline
(5) & k^{\ast} &0 &0 & k^{\ast} & & (A) \\ \hline
(6) & k^{\ast} &0 & k^{\ast} &0 & & (A) \\ \hline
(7) & k^{\ast} &0 & k^{\ast} & k^{\ast} & & (A) \\ \hline
(8) & k^{\ast} & k^{\ast} &0 &0 & -1 & D_6^0+A_1 \\ \hline
(9) & k^{\ast} & k^{\ast} &0 & k^{\ast} & & (A) \\ \hline
(10) & k^{\ast} & k^{\ast} & k^{\ast} &0 & & (A) \\ \hline
(11) & k^{\ast} & k^{\ast} & k^{\ast} & k^{\ast} & & (A) 
\end{array}$$
The cases ($\clubsuit$) contradict to the assumption (iii). \\
(1) $\delta=x(y^2+a_{10}) \partial_x  + y(y^2  + b_{02}y + a_{10}) \partial_y $: Since $(y^2+b_{02}y+a_{10})|_{y=0}=a_{10}\not= 0$, the equation 
$y^2+b_{02}y+a_{10}=0$ has a non-zero root $y=\alpha$. We see that $Y$ has singular points at the images of the origin and $(0, \alpha)$ 
on $U_0$. Since $(0,\alpha)$ lies on the line defined by $X_1=0$, we can settle the singular points on the images of the origins of $U_0$, $U_1$ 
and $U_2$ by suitable coordinate changes. This is the case (A). \\
(2) We have 
\begin{eqnarray*}
\delta &=& (xy^2+ a_{02}y^2 + a_{10}x) \partial_x + (y^3  + a_{10}y) \partial_y \\
&=& \dfrac{1}{z}((a_{02}zw^2+ a_{10} z^2+w^2 ) \partial_z + a_{02}w^3 \partial_w) = \dfrac{1}{u}((a_{10}u^2+1) \partial_u +a_{02} \partial_v). 
\end{eqnarray*}
We see that $\deg L=-1$ and $Y$ is smooth on $\pi(U_2)$. On $U_0$, we have $k[x,y]^{\delta}=k[x^2, y^2, xy^3+a_{02}y^3+a_{10}xy]
\cong k[X, Y, Z]/(Z^2+XY^3+a_{02}^2Y^3+a_{10}^2XY)= k[X, Y, Z]/(Z^2+(X+a_{02}^2)Y^3+a_{10}^2XY)$. Let 
$R=k[X, Y, Z]/(Z^2+(X+a_{02}^2)Y^3+a_{10}^2XY)$ and $\mathfrak{m}=(X,Y,Z)R$. The $\mathfrak{m}$-adic completion of $R$ is isomorphic 
to $k[[X, Y, Z]]/(Z^2+(X+a_{02}^2)Y^3+a_{10}^2XY)\cong k[[X, Y, Z]]/(Z^2+Y^3+XY)= k[[X, Y, Z]]/(Z^2+(X+Y^2)Y)\cong k[[X, Y, Z]]/(Z^2+XY)$. 
On $U_1$, we have $k[z,w]^{\delta}=k[z^2, w^2, a_{02}zw^3+a_{10}z^2w+w^3]=k[z^2, w^2, a_{02}zw^3+a_{10}z^2w+w^3+ (a_{02}\sqrt{a_{10}^{-1}})w^4]
=k[z^2, w^2, (\sqrt{a_{10}}z+w)^2w+(a_{02}\sqrt{a_{10}^{-1}})(\sqrt{a_{10}}z+w)w^3]\cong k[z^2, w^2, z^2w+(a_{02}\sqrt{a_{10}^{-1}})zw^3]
\cong k[z^2, w^2, z^2w+zw^3]\cong k[X,Y,Z]/(Z^2+X^2Y+XY^3)$. Thus the configuration of the singular points of $Y$ is the type $D_6^0+A_1$. \\
(3) $\delta=(xy^2+ a_{02}y^2 + a_{10}x ) \partial_x  +y (y^2 + b_{02}y + a_{10}) \partial_y$: Since $(y^2+b_{02}y+a_{10})|_{y=0}=a_{10}\not= 0$, 
the equation $y^2+a_{02}y+a_{10}=0$ has a non-zero root $y=\alpha$. We see that $x=a_{02}\alpha^2(\alpha^2+a_{10})^{-1}$ is the root of the 
equation $F(x,\alpha)=\alpha^2x+a_{02}\alpha^2+a_{10}x=0$ and $Y$ has singular points at the images of the origin and 
$(a_{02}\alpha^2(\alpha^2+a_{10})^{-1}, \alpha)$ on $U_0$. Since $(a_{02}\alpha^2(\alpha^2+a_{10})^{-1}, \alpha)$ does not lie on the line 
defined by $X_2=0$, we can settle the singular points on the images of the origins of $U_0$, $U_1$ and $U_2$ by suitable coordinate changes. 
This is the case (A). \\
(4) We have 
$$\delta = (xy^2+a_{20}x^2 ) \partial_x  + y^3 \partial_y = \dfrac{1}{z}((w^2 +a_{20}z ) \partial_z + a_{20}w \partial_w)= \dfrac{1}{u}(\partial_u + a_{20}v^2 \partial_v).$$
We see that $\deg L=-1$ and $Y$ is smooth on $\pi(U_2)$. On $U_0$, we have $k[x,y]^{\delta}=k[x^2, y^2, xy^3+a_{20}x^2y]\cong k[x^2, y^2, xy^3+x^2y] 
\cong k[X,Y,Z]/(Z^2+XY^3+X^2Y) = k[X,Y,Z]/(Z^2+X^2Y+XY^3)$. On $U_1$, we have $k[z,w]^{\delta}=k[z^2, w^2, w^3+a_{20}zw]=k[z^2, w^2, (a_{20}z+w^2)w]
\cong k[z^2, w^2, zw]\cong k[X,Y,Z]/(Z^2+XY)$. Thus the configuration of the singular points of $Y$ is the type $D_6^0+A_1$. \\
(5) $\delta=x(y^2 + a_{20}x ) \partial_x  + y^2(y + b_{02}) \partial_y$: We see that $Y$ has singular points at the images of the origin and 
$(0,b_{02})$ on $U_0$. Since $(0,b_{02})$ lies on the line defined by $X_1=0$, we can settle the singular points on the images of the origins of $U_0$, 
$U_1$ and $U_2$ by suitable coordinate changes. This is the case (A). \\
(6) $\delta=x(y^2+a_{20}x + a_{10} ) \partial_x  +y (y^2+ a_{10}) \partial_y$: We see that $Y$ has singular points at the images of the origin and 
$(0,\sqrt{a_{10}})$ on $U_0$. Since $(0,\sqrt{a_{10}})$ lies on the line defined by $X_1=0$, we can settle the singular points on the images of the origins 
of $U_0$, $U_1$ and $U_2$ by suitable coordinate changes. This is the case (A). \\
(7) $\delta=x(y^2+a_{20}x + a_{10}) \partial_x  +y (y^2 + b_{02}y + a_{10}) \partial_y$: Since $(y^2+b_{02}y+a_{10})|_{y=0}=a_{10}\not= 0$, the equation 
$y^2+b_{02}y+a_{10}=0$ has a non-zero root $y=\alpha$. We see that $Y$ has singular points at the images of the origin and $(0, \alpha)$ on $U_0$. 
Since $(0,\alpha)$ lies on the line defined by $X_1=0$, we can settle the singular points on the images of the origins of $U_0$, $U_1$ and $U_2$ by suitable 
coordinate changes. This is the case (A). \\
(8) We have 
\begin{eqnarray*}
\delta &=& (xy^2+a_{20}x^2  + a_{02}y^2 ) \partial_x + y^3 \partial_y \\
&=& \dfrac{1}{z}((a_{02}zw^2+w^2 +a_{20}z ) \partial_z + (a_{02}w^3+a_{20}w) \partial_w)= \dfrac{1}{u}( \partial_u + (a_{20} v^2 + a_{02}) \partial_v).
\end{eqnarray*}
We see that $\deg L=-1$ and $Y$ is smooth on $\pi(U_2)$. On $U_0$, we have $k[x,y]^{\delta}=k[x^2, y^2, xy^3+a_{20}x^2y+a_{02}y^3]
=k[x^2, y^2, xy^3+a_{20}x^2y+a_{02}y^3+\sqrt{a_{02}}\sqrt{a_{20}^{-1}}y^4]
=k[x^2, y^2, (\sqrt{a_{20}}x+\sqrt{a_{02}}y)^2y+\sqrt{a_{20}^{-1}}(\sqrt{a_{20}}x+\sqrt{a_{02}}y)y^3]
\cong k[x^2, y^2, x^2y+\sqrt{a_{20}^{-1}}xy^3]\cong k[x^2, y^2, x^2y+xy^3]\cong k[X,Y,Z]/(Z^2+X^2Y+XY^3)$. On $U_1$, we have 
$k[z, w]^{\delta}=k[z^2, w^2, a_{02}zw^3+w^3+a_{20}zw]\cong k[X,Y,Z]/(Z^2+a_{02}^2XY^3+Y^3+a_{20}^2XY)=k[X,Y,Z]/(Z^2+(a_{02}^2X+1)Y^3+a_{20}^2XY)$. 
Let $R=k[X,Y,Z]/(Z^2+(a_{02}^2X+1)Y^3+a_{20}^2XY)$ and $\mathfrak{m}=(X,Y,Z)R$. The $\mathfrak{m}$-adic completion of $R$ is isomorphic to 
$k[[X,Y,Z]]/(Z^2+(a_{02}^2X+1)Y^3+a_{20}^2XY)\cong k[[X,Y,Z]]/(Z^2+Y^3+XY)=k[[X,Y,Z]]/(Z^2+(X+Y^2)Y)\cong k[[X,Y,Z]]/(Z^2+XY)$. Thus the 
configuration of the singular points of $Y$ is the type $D_6^0+A_1$. \\
(9) $\delta=(xy^2+a_{20}x^2 + a_{02}y^2) \partial_x  + y^2(y + b_{02}) \partial_y$: Since $(a_{20}x^2+b_{02}^2x+a_{02}b_{02}^2)|_{x=0}=a_{02}b_{02}^2\not= 0$, 
the equation $F(x,b_{02})=a_{20}x^2+b_{02}^2x+a_{02}b_{02}^2=0$ has a non-zero root $x=\alpha$. We see that $Y$ has singular points at the images of 
the origin and $(\alpha, b_{02})$ on $U_0$. Since $(\alpha, b_{02})$ does not lie on the line defined by $X_2=0$, we can settle the singular points on the 
images of the origins of $U_0$, $U_1$ and $U_2$ by suitable coordinate changes. This is the case (A). \\
(10) $\delta=(xy^2+a_{20}x^2 + a_{02}y^2 + a_{10}x ) \partial_x  +y (y^2+ a_{10}) \partial_y$: We see that $x=\sqrt{a_{02}a_{10}a_{20}^{-1}}$ is the non-zero 
root of the equation $F(x,\sqrt{a_{10}})=a_{20}x^2 + a_{02}a_{10}=0$ and $Y$ has singular points at the images of the origin and 
$(\sqrt{a_{02}a_{10}a_{20}^{-1}}, \sqrt{a_{10}})$ on $U_0$. Since $(\sqrt{a_{02}a_{10}a_{20}^{-1}},\sqrt{a_{10}})$ does not lie on the line defined by $X_2=0$, 
we can settle the singular points on the images of the origins of $U_0$, $U_1$ and $U_2$ by suitable coordinate changes. This is the case (A). \\
(11) $\delta=(xy^2+a_{20}x^2 + a_{02}y^2 + a_{10}x ) \partial_x  +y (y^2 + b_{02}y + a_{10}) \partial_y$: Since $(y^2+b_{02}y+a_{10})|_{y=0}=a_{10}\not= 0$, 
the equation $y^2+b_{02}y+a_{10}=0$ has a non-zero root $y=\alpha$. Since $(a_{20}x^2+(a_{10}+\alpha^2)x+a_{02}\alpha^2)|_{x=0}=a_{02}\alpha^2\not= 0$, 
the equation $F(x,\alpha)=a_{20}x^2+(a_{10}+\alpha^2)x+a_{02}\alpha^2=0$ has a non-zero root $x=\beta$. We see that $Y$ has singular points at the 
images of the origin and $(\beta, \alpha)$ on $U_0$. Since $(\beta, \alpha)$ does not lie on the line defined by $X_2=0$, we can settle the singular points 
on the images of the origins of $U_0$, $U_1$ and $U_2$ by suitable coordinate changes. This is the case (A). \\
\noindent(C) We settle the singular point on the image of the origin of $U_0$ by suitable coordinate changes. We may assume that 
\begin{eqnarray*}
\delta &=& (a_{30}x^3+a_{12}xy^2+a_{20}x^2 + a_{02}y^2 + a_{10}x ) \partial_x  + (a_{30}x^2y+a_{12}y^3+b_{20}x^2 + b_{02}y^2 + a_{10}y) \partial_y \\
 &=& \dfrac{1}{z}((a_{02} zw^2+a_{10} z^2+a_{12}w^2 +a_{20}z + a_{30}) \partial_z + ( a_{02}w^3+b_{02}w^2  + a_{20}w +b_{20}  ) \partial_w)\\
 &=& \dfrac{1}{u}((b_{20} uv^2+a_{10} u^2+a_{30}v^2 + b_{02}u+a_{12} ) \partial_u +( b_{20}v^3+ a_{20}v^2 + b_{02}v + a_{02} ) \partial_v).
\end{eqnarray*}
Since $Y$ is smooth on $\pi(U_1)$ and $\pi(U_2)$, we see that $(a_{30}\not=0 \vee b_{20}\not=0) \wedge (a_{12}\not=0 \vee a_{02}\not=0)$. \\
(C-1) Suppose that $a_{30}=0$. We have $b_{20}\not=0$. By multiplying $b_{20}^{-1}$, we may assume that 
\begin{eqnarray*}
\delta &=& (a_{12}xy^2+a_{20}x^2 + a_{02}y^2 + a_{10}x ) \partial_x  + (a_{12}y^3+x^2 + b_{02}y^2 + a_{10}y) \partial_y \\
 &=& \dfrac{1}{z}((a_{02} zw^2+a_{10} z^2+a_{12}w^2 +a_{20}z ) \partial_z + ( a_{02}w^3+b_{02}w^2  + a_{20}w +1) \partial_w)\\
 &=& \dfrac{1}{u}((uv^2+a_{10} u^2 + b_{02}u+a_{12} ) \partial_u +( v^3+ a_{20}v^2 + b_{02}v + a_{02} ) \partial_v).
\end{eqnarray*}
Set $G=0$. Then we have $x^2=a_{12}y^3+ b_{02}y^2 + a_{10}y$ and can check that $F^2= y(a_{12}^3y^6+a_{12}^2 (b_{02}+a_{20}^2)y^5+a_{12}^2 a_{10}y^4
+(a_{20}b_{02}+a_{02})^2y^3+a_{10}^2 a_{12}y^2+a_{10}^2(a_{20}^2+b_{02})y+a_{10}^3)$. Let $f=a_{12}^3y^6+a_{12}^2(b_{02}+a_{20}^2)y^5+a_{12}^2 a_{10}y^4
+(a_{20}b_{02}+a_{02})^2y^3+a_{10}^2 a_{12}y^2+a_{10}^2(a_{20}^2+b_{02})y+a_{10}^3$. If two or all out of $a_{12}$, $a_{20}b_{02}+a_{02}$ 
and $a_{10}$ are non-zero, then the equation $f=0$ has a non-zero root, since two or all terms out of $a_{12}^3y^6$, $(a_{20}b_{02}+a_{02})^2y^3$ 
and $a_{10}^3$ are non-zero. Then there exist two or more singular points, which is the case (A) or (B). Thus we may assume that two or all out 
of $a_{12}$, $a_{20}b_{02}+a_{02}$ and $a_{10}$ are zero. We consider the four cases such that two or all out of $a_{12}$, $a_{20}b_{02}+a_{02}$ and 
$a_{10}$ are zero. The following table summarizes what we know. 
$$\begin{array}{c|c|c|c|c|c} 
 & a_{12}  & a_{20}b_{02}+a_{02} & a_{10} & \deg L & \\ \hline
(1) & k^{\ast} & 0 & 0 & -1 & E_7^0 \\ \hline
(2) & 0 & k^{\ast} & 0 & & \mbox{(A) or (B)} \\ \hline
(3) & 0 & 0 & k^{\ast} & & \mbox{(A) or (B)} \\ \hline
(\clubsuit) & 0 & 0 & 0 & & 
\end{array}$$
$(\clubsuit)$ Since $a_{02}=a_{20}b_{02}$, we have $\delta=a_{20}(x^2 + b_{02}y^2) \partial_x  + (x^2 + b_{02}y^2) \partial_y$. This contradicts to the 
assumption (iii). \\
(1) Suppose that $b_{02}+a_{20}^2\not=0$. Since two terms $a_{12}^3y^6$ and $a_{12}^2(b_{02}+a_{20}^2)y^5$ are non-zero, the equation $f=0$ has 
a non-zero root. Then there exist two or more singular points, which is the case (A) or (B). Thus we may assume that $b_{02}+a_{20}^2=0$. We have 
$b_{02}=a_{20}^2$, $a_{02}=a_{20}b_{02}=a_{20}^3$ and 
\begin{eqnarray*}
\delta &=& (a_{12}xy^2+a_{20}x^2 + a_{20}^3y^2) \partial_x  + (a_{12}y^3+x^2 + a_{20}^2y^2) \partial_y \\
 &=& \dfrac{1}{z}((a_{20}^3 zw^2+a_{12}w^2 +a_{20}z ) \partial_z + ( a_{20}^3w^3+a_{20}^2w^2  + a_{20}w +1) \partial_w)\\
 &=& \dfrac{1}{u}((uv^2+ a_{20}^2u+a_{12} ) \partial_u +( v^3+ a_{20}v^2 + a_{20}^2v + a_{20}^3 ) \partial_v).
\end{eqnarray*}
We see that $\deg L=-1$. On $U_0$, we have $k[x,y]^{\delta}=k[x^2,y^2,a_{12}xy^3+a_{20}x^2y + a_{20}^3y^3+x^3+a_{20}^2xy^2]
=k[x^2,y^2,(x+a_{20}y)^3+a_{12}xy^3]=k[x^2,y^2,(x+a_{20}y)^3+a_{12}(x+a_{20}y)y^3+a_{12}a_{20}y^4]=k[x^2,y^2,(x+a_{20}y)^3+a_{12}(x+a_{20}y)y^3]
\cong k[x^2,y^2,x^3+a_{12}xy^3]\cong k[x^2,y^2,x^3+xy^3]\cong k[X,Y,Z]/(Z^2+X^3+XY^3)$. On $U_1$, we see that $w=a_{20}^{-1}$ is the only one root 
of the equation $a_{20}^3w^3+a_{20}^2w^2  + a_{20}w +1=0$, since $a_{20}^3w^3+a_{20}^2w^2  + a_{20}w +1=(a_{20}w+1)^3$. Thus $Y$ is smooth on 
$\pi(U_1)$, since $(a_{20}^3 zw^2+a_{12}w^2 +a_{20}z)|_{w=a_{20}^{-1}}=a_{12}a_{20}^{-2}\not=0$. We see also that $Y$ is smooth on $\pi(U_2)$ in the same 
way. Therefore the configuration of the singular points of $Y$ is the type $E_7^0$. \\
(2) We have $\delta=z^{-1}(z(a_{02} w^2+a_{20})\partial_z+(a_{02}w^3+b_{02}w^2+a_{20}w+1)\partial_w)$. Let $g=a_{02}w^3+b_{02}w^2+a_{20}w+1$. Since 
$a_{20}b_{02}+a_{02}\not=0$, we see that $a_{20}$, $b_{02}$ or $a_{02}$ is non-zero. Thus the equation $g=0$ has a non-zero root. Then there exist two 
or more singular points, which is the case (A) or (B). \\
(3) As with (1), $b_{02}=a_{20}^2$ and $a_{02}=a_{20}^3$. Thus $\delta=z^{-1}(z(a_{20}^3w^2+a_{10} z+a_{20} ) \partial_z + (a_{20}w+1)^3 \partial_w)$. 
We see that $Y$ has a singular point at the image of $(0,a_{20}^{-1})$ on $U_1$, which is the case (A) or (B).\\
(C-2) Suppose that $a_{30}\not=0$. By multiplying $a_{30}^{-1}$, we may assume that
\begin{eqnarray*}
\delta &=& (x^3+a_{12}xy^2+a_{20}x^2 + a_{02}y^2 + a_{10}x ) \partial_x  + (x^2y+a_{12}y^3+b_{20}x^2 + b_{02}y^2 + a_{10}y) \partial_y \\
&=&\dfrac{1}{z}((a_{10} z^2+(a_{02}w^2+a_{20})z+(a_{12}w^2+1)) \partial_z + ( a_{02}w^3+b_{02}w^2+a_{20}w+b_{20}) \partial_w\\
&=&\dfrac{1}{u}((a_{10} u^2+(b_{20} v^2 + b_{02})u+(v^2+a_{12})) \partial_u +( b_{20}v^3+ a_{20}v^2 + b_{02}v + a_{02})) \partial_v.
\end{eqnarray*} 
(C-2-1) Suppose that $a_{10}\not=0$. If $a_{02}$, $b_{02}$ or $a_{20}$ is non-zero, then there exists a root $w=\alpha$ of the equation 
$a_{02}w^3+b_{02}w^2+a_{20}w+b_{20}=0$. Let $z=\beta$ be a root of the equation $a_{10} z^2+(a_{02}\alpha^2+a_{20})z+a_{12}\alpha^2+1=0$. Then 
$Y$ has a singular point at the image of $(\beta, \alpha)$ on $U_1$, which is the case (A) or (B). If $a_{02}=b_{02}=a_{20}=0$, then $a_{12}\not=0$ 
and $\delta = (x^3+a_{12}xy^2+ a_{10}x ) \partial_x  + (x^2y+a_{12}y^3+b_{20}x^2 + a_{10}y) \partial_y$. We see that $Y$ has a singular point at the 
image of $(0,\sqrt{a_{12}^{-1}a_{10}})$ on $U_0$, which is the case (A) or (B). \\
(C-2-2) Suppose that $a_{10}=b_{20}=0$. We have 
\begin{eqnarray*}
\delta &=& (x^3+a_{12}xy^2+a_{20}x^2 + a_{02}y^2) \partial_x  + (x^2y+a_{12}y^3+ b_{02}y^2) \partial_y \\
 &=& \dfrac{1}{z}((a_{02}zw^2+a_{12}w^2+a_{20}z+1) \partial_z + ( a_{02}w^3+b_{02}w^2+a_{20}w) \partial_w) \\
 &=& \dfrac{1}{u}((b_{02}u+v^2+a_{12}) \partial_u +( a_{20}v^2 + b_{02}v + a_{02}) \partial_v).
\end{eqnarray*}
Assume that $a_{20}\not=0$. If $F$ and $G$ have no common factors, then $Y$ has a singular point at the image of $(a_{20},0)$ on $U_0$, which 
is the case (A) or (B). Thus we may assume that $a_{20}=0$. We consider the eight cases such that each coefficient $a_{12}$, $a_{02}$ and $b_{02}$ 
is zero or not. The following table summarizes what we know. 
$$\begin{array}{c|c|c|c|c|c}
 & a_{12} & a_{02} & b_{02} & \deg L & \\ \hline
(\clubsuit) & 0 & 0 & 0 & & \delta=x^3\partial_x  + x^2y \partial_y \\ \hline
(1) & 0 & 0 & k^{\ast} & & \mbox{(A) or (B)} \\ \hline
(2) & 0 & k^{\ast} & 0 & -1 & E_7^0 \\ \hline
(3) & 0 & k^{\ast} & k^{\ast} & & \mbox{(A) or (B)} \\ \hline
(\clubsuit) & k^{\ast} & 0 & 0 & & \delta=x(x^2+a_{12}y^2) \partial_x  + y(x^2+a_{12}y^2) \partial_y \\ \hline
(4) & k^{\ast} & 0 & k^{\ast} & & \mbox{(A) or (B)} \\ \hline
(5) & k^{\ast} & k^{\ast} & 0 & -1 & E_7^0 \\ \hline
(6) & k^{\ast} & k^{\ast} & k^{\ast} & & \mbox{(A) or (B)}
\end{array}$$
The cases ($\clubsuit$) contradict to the assumption (iii). \\
(1) We have $\delta=u^{-1}((b_{02}u+v^2) \partial_u + b_{02}v \partial_v)$. Thus $Y$ has a singular point at the image of the origin of $U_2$, which 
is the case (A) or (B). \\
(2) We have 
$$\delta = (x^3+ a_{02}y^2) \partial_x  + x^2y\partial_y 
 = \dfrac{1}{z}((a_{02}zw^2+1) \partial_z + a_{02}w^3\partial_w) 
 = \dfrac{1}{u}(v^2 \partial_u + a_{02} \partial_v).$$
We see that $\deg L=-1$ and $Y$ is smooth on $\pi(U_1)$ and $\pi(U_2)$. On $U_0$, we have $k[x,y]^{\delta}=k[x^2,y^2, x^3y+a_{02}y^3]
\cong k[x^2,y^2, x^3y+y^3]\cong k[X,Y,Z]/(Z^2+X^3Y+Y^3)= k[X,Y,Z]/(Z^2+X^3+XY^3)$. Thus the configuration of the singular points of $Y$ is the 
type $E_7^0$. \\
(3) $\delta =(x^3 + a_{02}y^2) \partial_x  + y(x^2+ b_{02}y) \partial_y$: We see that $Y$ has a singular point at the image of 
$(b_{02}^2a_{02}^{-1}, b_{02}^3a_{02}^{-2})$ on $U_0$, which is the case (A) or (B). \\
(4) $\delta=x(x^2+a_{12}y^2) \partial_x  + y(x^2+a_{12}y^2+ b_{02}y) \partial_y$: We see that $Y$ has a singular point at the image of 
$(0,b_{02}a_{12}^{-1})$ on $U_0$, which is the case (A) or (B). \\
(5) We have 
\begin{eqnarray*}
\delta &=& (x^3+a_{12}xy^2 + a_{02}y^2) \partial_x + (x^2y+a_{12}y^3) \partial_y \\ 
 &=& \dfrac{1}{z}((a_{02}z w^2+a_{12}w^2+1 ) \partial_z+ a_{02} w^3 \partial_w) = \dfrac{1}{u}((v^2+a_{12}) \partial_u +a_{02}\partial_v).
\end{eqnarray*}
We see that $\deg L=-1$ and $Y$ is smooth on $\pi(U_1)$ and $\pi(U_2)$. On $U_0$, we have $k[x,y]^{\delta}=k[x^2,y^2, x^3y+a_{12}xy^3+a_{02}y^3]
\cong k[X,Y,Z]/(Z^2+X^3Y+a_{12}^2XY^3+a_{02}^2Y^3)= k[X,Y,Z]/(Z^2+X^3Y+(a_{12}^2X+a_{02}^2)Y^3)$. Let $R=k[X,Y,Z]/(Z^2+X^3Y+(a_{12}^2X+a_{02}^2)Y^3)$ 
and $\mathfrak{m}=(X,Y,Z)R$. The $\mathfrak{m}$-adic completion of $R$ is isomorphic to $k[[X,Y,Z]]/(Z^2+X^3Y+(a_{12}^2X+a_{02}^2)Y^3)
\cong k[[X,Y,Z]]/(Z^2+X^3Y+Y^3)\cong k[[X,Y,Z]]/(Z^2+X^3+XY^3)$. Thus the configuration of the singular points of $Y$ is the type $E_7^0$. \\
(6) $\delta=(x^3+a_{12}xy^2+ a_{02}y^2) \partial_x  + y(x^2+a_{12}y^2+ b_{02}y) \partial_y$: Set $x^2+a_{12}y^2+ b_{02}y=0$. Then we have 
$x^2=y(a_{12}y+ b_{02})$ and can check that $F^2=y^3((a_{12}b_{02}^2+a_{02}^2)y+b_{02}^3)$. \\
(6-1) Suppose that $a_{12}b_{02}^2+a_{02}^2\not=0$. Let $\alpha=b_{02}^3(a_{12}b_{02}^2+a_{02}^2)^{-1}$ and $\beta=\sqrt{\alpha(a_{12}\alpha+ b_{02})}$. 
We see that $(F(\beta,\alpha))^2=0$, so that $F(\beta,\alpha)=G(\beta,\alpha)=0$. Thus $Y$ has a singular point at the image of $(\beta,\alpha)$ on 
$U_0$, which is the case (A) or (B). \\
(6-2) Suppose that $a_{12}b_{02}^2+a_{02}^2=0$. Then we have $a_{12}=a_{02}^2b_{02}^{-2}$ and $\delta=z^{-1}((a_{02}zw^2+a_{02}^2b_{02}^{-2}w^2+1)\partial_z
+w^2 ( a_{02}w+b_{02}) \partial_w)$. We see that $Y$ has a singular point at the image of $(0,b_{02}a_{02}^{-1})$ on $U_1$, which is the case (A) or (B). \\
(C-2-3) Suppose that $a_{10}=0$ and $b_{20}\not=0$. We have 
\begin{eqnarray*}
\delta &=& (x^3+a_{12}xy^2+a_{20}x^2 + a_{02}y^2) \partial_x  + (x^2y+a_{12}y^3+b_{20}x^2 + b_{02}y^2 ) \partial_y \\
 &=&\dfrac{1}{z}((a_{02}zw^2+a_{12}w^2+a_{20}z+1) \partial_z + ( a_{02}w^3+b_{02}w^2+a_{20}w+b_{20}) \partial_w)\\
 &=& \dfrac{1}{u}((b_{20} uv^2 +v^2+ b_{02}u+a_{12}) \partial_u +( b_{20}v^3+ a_{20}v^2 + b_{02}v + a_{02}) \partial_v).
\end{eqnarray*}
We consider the sixteen cases such that each coefficient $a_{12}$, $a_{20}$, $a_{02}$ and $b_{02}$ is zero or not. The following table summarizes what we know. 
$$\begin{array}{c|c|c|c|c|c|c}
 & a_{12} & a_{20} & a_{02} & b_{02} & \deg L & \\ \hline
(\diamondsuit) & 0 & 0 & 0 & 0 & & \\ \hline
(\diamondsuit) & 0 & 0 & 0 & k^{\ast} & & \\ \hline
(1) & 0 & 0 & k^{\ast} & 0 & & \mbox{(A) or (B)} \\ \hline
(2) & 0 & 0 & k^{\ast} & k^{\ast} & & \mbox{(A) or (B)} \\ \hline
(\diamondsuit) & 0 & k^{\ast} & 0 & 0 & & \\ \hline
(\diamondsuit) & 0 & k^{\ast} & 0 & k^{\ast} & & \\ \hline
(3) & 0 & k^{\ast} & k^{\ast} & 0 & & \mbox{(A) or (B)} \\ \hline
(4) & 0 & k^{\ast} & k^{\ast} & k^{\ast} & & \mbox{(A), (B) or $E_7^0$} \\ \hline
(5) & k^{\ast} & 0 & 0 & 0 & -1 & E_7^0\\ \hline
(6) & k^{\ast} & 0 & 0 & k^{\ast} & & \mbox{(A) or (B)} \\ \hline
(7) & k^{\ast} & 0 & k^{\ast} & 0 & & \mbox{(A) or (B)} \\ \hline
(8) & k^{\ast} & 0 & k^{\ast} & k^{\ast} & & \mbox{(A) or (B)} \\ \hline
(9) & k^{\ast} & k^{\ast} & 0 & 0 & & \mbox{(A) or (B)} \\ \hline
(10) & k^{\ast} & k^{\ast} & 0 & k^{\ast} & & \mbox{(A) or (B)} \\ \hline
(11) & k^{\ast} & k^{\ast} & k^{\ast} & 0 & & \mbox{(A) or (B)} \\ \hline
(12) & k^{\ast} & k^{\ast} & k^{\ast} & k^{\ast} & & \mbox{(A), (B) or $E_7^0$} 
\end{array}$$
In the case (4) or (12), we have $\deg L=-1$ if the configuration of the singular points of $Y$ is the type $E_7^0$. \\
$(\diamondsuit)$ If $F$ and $G$ have no common factors, then we see that $Y$ has a singular point at the origin of $U_2$, since $a_{12}=a_{02}=0$. 
This is the case (A) or (B). \\
(1) $\delta=(x^3+ a_{02}y^2) \partial_x  + x^2(y+b_{20}) \partial_y$: Let $x=\sqrt[3]{a_{02}b_{20}^2}$ be a root of the equation $x^3+a_{02}b_{20}^2=0$. 
We see that $Y$ has a singular point at the image of $(\sqrt[3]{a_{02}b_{20}^2},b_{20})$ on $U_0$, which is the case (A) or (B). \\
(2) We have $\delta=z^{-1}((a_{02}zw^2+1) \partial_z + ( a_{02}w^3+b_{02}w^2+b_{20}) \partial_w)$. Since $(a_{02}w^3+b_{02}w^2+b_{20})|_{w=0}=b_{20}\not=0$, 
the equation $a_{02}w^3+b_{02}w^2+b_{20}=0$ has a non-zero root $w=\alpha$. We see that $Y$ has a singular point at the image of 
$(a_{02}^{-1}\alpha^{-2}, \alpha)$ on $U_1$, which is the case (A) or (B). \\
(3) $\delta=(x^3+a_{20}x^2 + a_{02}y^2) \partial_x  + x^2(y+b_{20}) \partial_y$: Since $(x^3+a_{20}x^2 + a_{02}b_{20}^2)|_{x=0}=a_{02}b_{20}^2\not=0$, 
the equation $x^3+a_{20}x^2 + a_{02}b_{20}^2=0$ has a non-zero root $x=\alpha$. We see that $Y$ has a singular point at the image of $(\alpha, b_{20})$ 
on $U_0$, which is the case (A) or (B). \\
(4) We have 
\begin{eqnarray*}
\delta &=& (x^3+a_{20}x^2 + a_{02}y^2) \partial_x  + (x^2y+b_{20}x^2 + b_{02}y^2 ) \partial_y \\
 &=&\dfrac{1}{z}(((a_{02}w^2+a_{20})z+a_{12}w^2+1) \partial_z + ( a_{02}w^3+b_{02}w^2+a_{20}w+b_{20}) \partial_w)\\
 &=& \dfrac{1}{u}((b_{20} uv^2 +v^2+ b_{02}u) \partial_u +( b_{20}v^3+ a_{20}v^2 + b_{02}v + a_{02}) \partial_v).
\end{eqnarray*}
If $F$ and $G$ have no common factors, then $\deg L=-1$. Let $h(w)=w^3+b_{02}a_{02}^{-1}w^2+a_{20}a_{02}^{-1}w+b_{20}a_{02}^{-1}$. \\
(4-1) Suppose that the equation $h=0$ has a non-zero cubic root $w=\alpha$. Then $\alpha=b_{02}a_{02}^{-1}$, $\alpha^2=a_{20}a_{02}^{-1}$ 
and $\alpha^3=b_{20}a_{02}^{-1}$, so that $b_{02}=\alpha a_{02}$, $a_{20}=\alpha^2 a_{02}$ and $b_{20}=\alpha^3a_{02}$. On $U_0$, we have $k[x,y]^{\delta}
=k[x^2,y^2,x^3y+a_{20}x^2y+a_{02}y^3+b_{20}x^3+b_{02}xy^2]=k[x^2,y^2,x^3y+\alpha^2a_{02}x^2y+a_{02}y^3+\alpha^3a_{02}x^3+\alpha a_{02}xy^2]
=k[x^2,y^2,x^3y+a_{02}(\alpha x+y)^3]=k[x^2,y^2, x^3(\alpha x+y)+\alpha x^4+a_{02}(\alpha x+y)^3]=k[x^2,y^2, x^3(\alpha x+y)+a_{02}(\alpha x+y)^3]
\cong k[x^2,y^2, x^3y+y^3]\cong k[X,Y,Z]/(Z^2+X^3Y+Y^3)\cong k[X,Y,Z]/(Z^2+X^3+XY^3)$. We see that $Y$ has a singular point, which is an 
$E_7^0$-singularity, at the image of the origin of $U_0$ and is smooth except this singular point on $\pi(U_0)$. Since 
$(a_{02}zw^2+a_{20}z+1)_{w=\alpha}=(a_{02}zw^2+\alpha^2 a_{02}z+1)_{w=\alpha}=1\not=0$, we see that $Y$ is smooth on $\pi(U_1)$. Since 
$(b_{20}v^3+ a_{20}v^2 + b_{02}v + a_{02})|_{u=v=0}=a_{02}\not=0$, we see also that $Y$ is smooth at the image of the origin of $U_2$. Thus the 
configuration of the singular points of $Y$ is the type $E_7^0$. \\
(4-2) Suppose that the equation $h=0$ has different roots. Then there exists an element $\alpha$ of $k$ such that $h(\alpha)=0$ and 
$a_{02}\alpha^2+a_{20}\not=0$, since $a_{02}\beta^2+a_{20}=a_{02}\gamma^2+a_{20}=0$ implies $\beta=\gamma$ for $\beta, \gamma\in k$. We see that $Y$ has a singular 
point at the image of $((a_{12}\alpha^2+1)(a_{02}\alpha^2+a_{20})^{-1},\alpha)$ on $U_1$, which is the case (A) or (B). \\
(5) We have 
\begin{eqnarray*}
\delta &=& (x^3+a_{12}xy^2 ) \partial_x + (x^2y+a_{12}y^3+b_{20}x^2 ) \partial_y \\
 &=& \dfrac{1}{z}((a_{12}w^2+1) \partial_z + b_{20} \partial_w) = \dfrac{1}{u}((b_{20}uv^2+v^2+a_{12}) \partial_u + b_{20}v^3 \partial_v).
\end{eqnarray*}
We see that $\deg L=-1$ and $Y$ is smooth on $\pi(U_1)$ and $\pi(U_2)$. 
On $U_0$, we have $k[x^2,y^2, x^3y+a_{12}xy^3+b_{20}x^3]\cong k[X,Y,Z]/(Z^2+X^3Y+a_{12}^2XY^3+b_{20}^2X^3)\cong k[X,Y,Z]/(Z^2+(Y+b_{02}^2)X^3+a_{12}^2XY^3)$. 
Let $R=k[X,Y,Z]/(Z^2+(Y+b_{02}^2)X^3+a_{12}^2XY^3)$ and $\mathfrak{m}=(X,Y,Z)R$. The $\mathfrak{m}$-adic completion of $R$ is isomorphic to 
$k[[X,Y,Z]]/(Z^2+(Y+b_{02}^2)X^3+a_{12}^2XY^3)\cong k[[X,Y,Z]]/(Z^2+X^3+XY^3)$. Thus the configuration of the singular points of $Y$ is the type $E_7^0$. \\
(6) $\delta=(x^3+a_{12}xy^2) \partial_x  + (x^2y+a_{12}y^3+b_{20}x^2 + b_{02}y^2 ) \partial_y$: We see that $Y$ has a singular point at the image of 
$(0,a_{12}^{-1}b_{20})$ on $U_0$, which is the case (A) or (B). \\
(7) $\delta=(x^3+a_{12}xy^2 + a_{02}y^2) \partial_x  + ((y+b_{20})x^2+a_{12}y^3) \partial_y$: Set $G=0$. Then we have $x^2=a_{12}y^3(y+b_{20})^{-1}$ and 
can check that $(y+b_{20})^3F^2=y^4((a_{12}^3b_{20}^2+a_{02}^2)y^3+a_{02}^2b_{20}y^2+a_{02}^2b_{20}^2y+a_{02}^2b_{20}^3)$. Let 
$f(y)=(a_{12}^3b_{20}^2+a_{02}^2)y^3+a_{02}^2b_{20}y^2+a_{02}^2b_{20}^2y+a_{02}^2b_{20}^3$. Since $f(0)=a_{02}^2b_{20}^3\not=0$ and 
$f(b_{20})=a_{12}^3b_{20}^5\not=0$, the equation $f=0$ has a non-zero root $y=\alpha$ such that $\alpha\not=b_{20}$. Let 
$\beta=\sqrt{a_{12}\alpha^3(\alpha+b_{20})^{-1}}$. We see that $(\alpha+b_{02})^{3}(F(\beta , \alpha))^2=0$, so that $F(\beta , \alpha)=G(\beta , \alpha)=0$. 
Thus $Y$ has a singular point at the image of $(\beta, \alpha)$ on $U_0$, which is the case (A) or (B). \\
(8) $\delta=(x^3+a_{12}xy^2+ a_{02}y^2) \partial_x  + ((y+b_{20})x^2+(a_{12}y + b_{02})y^2 ) \partial_y$: Set $G=0$. Then we have 
$x^2=(a_{12}y+ b_{02})y^2(y+b_{20})^{-1}$ and can check that 
$(y+b_{20})^3F^2=y^4((a_{12}b_{02}^2+a_{12}^3b_{20}^2+a_{02}^2)y^3+(b_{02}^3+a_{12}^2b_{02}b_{20}^2+a_{02}^2b_{20})y^2+a_{02}^2b_{20}^2y^5+a_{02}^2b_{20}^3)$. 
Let $f(y)=(a_{12}b_{02}^2+a_{12}^3b_{20}^2+a_{02}^2)y^3+(b_{02}^3+a_{12}^2b_{02}b_{20}^2+a_{02}^2b_{20})y^2+a_{02}^2b_{20}^2y+a_{02}^2b_{20}^3$. 
Then we can verify that $f(b_{20})=b_{20}^2(a_{12}b_{20}+b_{02})^3$. \\
(8-1) Suppose that $a_{12}b_{20}+b_{02}\not=0$. Since $f(0)=a_{02}^2b_{20}^3\not=0$ and $f(b_{20})=b_{20}^2(a_{12}b_{20}+b_{02})^3\not=0$, the equation 
$f=0$ has a non-zero root $y=\alpha$ such that $\alpha\not=b_{20}$. Let $\beta=\sqrt{(a_{12}\alpha+ b_{02})\alpha^2(\alpha+b_{20})^{-1}}$. We see that 
$(\alpha+b_{20})^3(F(\beta, \alpha))^2=0$, so that $F(\beta, \alpha)=G(\beta, \alpha)=0$. Thus $Y$ has a singular point at the image of $(\beta, \alpha)$ 
on $U_0$, which is the case (A) or (B). \\
(8-2) Suppose that $a_{12}b_{20}+b_{02}=0$. Since $(x^3+a_{12}b_{20}^2x+ a_{02}b_{20}^2)|_{x=0}=a_{02}b_{20}^2\not=0$, the equation 
$F(x,b_{20})=x^3+a_{12}b_{20}^2x+ a_{02}b_{20}^2=0$ has a non-zero root $x=\alpha$. Since $a_{12}b_{20}+b_{02}=0$, we have $G(\alpha, b_{20})=0$. Thus $Y$ has 
a singular point at the image of $(\alpha, b_{20})$ on $U_0$, which is the case (A) or (B). \\
(9) We have $\delta=z^{-1}((a_{20}z+(a_{12}w^2+1)) \partial_z + (a_{20}w+b_{20}) \partial_w)$. Thus $Y$ has a singular point at the image of 
$(a_{20}^{-1}(a_{12}a_{20}^{-2}b_{20}^2+1), a_{20}^{-1}b_{20})$ on $U_1$, which is the case (A) or (B). \\
(10) We have $\delta=z^{-1}((a_{20}z+(a_{12}w^2+1)) \partial_z + (b_{02}w^2+a_{20}w+b_{20}) \partial_w)$. Since $(b_{02}w^2+a_{20}w+b_{20})|_{w=0}=b_{20}\not=0$, 
the equation $b_{02}w^2+a_{20}w+b_{20}=0$ has a non-zero root $w=\alpha$. We see that $Y$ has a singular point at the image of 
$(a_{20}^{-1}(a_{12}\alpha^2+1),\alpha)$ on $U_1$, which is the case (A) or (B). \\
(11) We have $\delta=z^{-1}(((a_{02}w^2+a_{20})z+(a_{12}w^2+1)) \partial_z + ( a_{02}w^3+a_{20}w+b_{20}) \partial_w)$. Since 
$(a_{02}w^3+a_{20}w+b_{20})|_{w=0}=b_{20}\not=0$, the equation $a_{02}w^3+a_{20}w+b_{20}=0$ has a non-zero root $w=\alpha$. Note that 
$a_{02}\alpha^2+a_{20}=b_{20}\alpha^{-1}\not=0$, since $a_{02}\alpha^3+a_{20}\alpha+b_{20}=0$. We see that $Y$ has a singular point at the image of 
$((a_{02}\alpha^2+a_{20})^{-1}(a_{12}\alpha^2+1),\alpha)$ on $U_1$, which is the case (A) or (B). \\
(12) We have 
\begin{eqnarray*}
\delta &=& (x^3+a_{12}xy^2+a_{20}x^2 + a_{02}y^2) \partial_x  + (x^2y+a_{12}y^3+b_{20}x^2 + b_{02}y^2 ) \partial_y \\
 &=&\dfrac{1}{z}(((a_{02}w^2+a_{20})z+a_{12}w^2+1) \partial_z + ( a_{02}w^3+b_{02}w^2+a_{20}w+b_{20}) \partial_w)\\
 &=& \dfrac{1}{u}((b_{20} uv^2 +v^2+ b_{02}u+a_{12}) \partial_u +( b_{20}v^3+ a_{20}v^2 + b_{02}v + a_{02}) \partial_v).
\end{eqnarray*}
If $F$ and $G$ have no common factors, then $\deg L=-1$. Let $h(w)=w^3+b_{02}a_{02}^{-1}w^2+a_{20}a_{02}^{-1}w+b_{20}a_{02}^{-1}$. \\
(12-1) Suppose that the equation $h=0$ has a non-zero cubic root $w=\alpha$. Then $\alpha=b_{02}a_{02}^{-1}$, $\alpha^2=a_{20}a_{02}^{-1}$ 
and $\alpha^3=b_{20}a_{02}^{-1}$, so that $b_{02}=\alpha a_{02}$, $a_{20}=\alpha^2 a_{02}$ and $b_{20}=\alpha^3a_{02}$. If $1+a_{12}\alpha^2=0$, then 
\begin{eqnarray*}
\delta &=& \dfrac{1}{z}(((a_{02}w^2+\alpha^2 a_{02})z+a_{12}w^2+a_{12}\alpha^2) \partial_z + a_{02}(w+\alpha)^3 \partial_w) \\
&=& \dfrac{(w+\alpha)^2}{z}((a_{02}z+a_{12}) \partial_z + a_{02}(w+\alpha) \partial_w).
\end{eqnarray*}
We see that $Y$ has a singular point at the image of $(a_{02}^{-1}a_{12},\alpha)$ on $U_1$, which is the case (A) or (B). Thus we may assume that 
$1+a_{12}\alpha^2\not=0$. On $U_0$, we have 
\begin{eqnarray*}
\lefteqn{k[x,y]^{\delta}=k[x^2,y^2,x^3y+a_{12}xy^3+a_{20}x^2y+a_{02}y^3+b_{20}x^3+b_{02}xy^2]}\\
&=& k[x^2,y^2,x^3y+a_{12}xy^3+\alpha^2a_{02}x^2y+a_{02}y^3+\alpha^3a_{02}x^3+\alpha a_{02}xy^2] \\
&=& k[x^2,y^2,x^3(\alpha x+y)+\alpha x^4+a_{12}x(\alpha x+y)^3+a_{12}x(\alpha^3x^3+\alpha^2x^2y+\alpha xy^2)+a_{02}(\alpha x+y)^3]\\
&=& k[x^2,y^2,x^3(\alpha x+y)+(a_{12}x+a_{02})(\alpha x+y)^3+a_{12}\alpha^2x^3y]\\
&=& k[x^2,y^2,x^3(\alpha x+y)+(a_{12}x+a_{02})(\alpha x+y)^3+a_{12}\alpha^2x^3(\alpha x+y)+a_{12}\alpha^3x^4]\\
&=& k[x^2,y^2,(1+a_{12}\alpha^2)x^3(\alpha x+y)+(a_{12}x+a_{02})(\alpha x+y)^3]\\
&\cong& k[x^2,y^2,(1+a_{12}\alpha^2)x^3y+(a_{12}x+a_{02})y^3]\\
&\cong&k[X,Y,Z]/(Z^2+(1+a_{12}\alpha^2)^2X^3Y+(a_{12}^2X+a_{02}^2)Y^3).
\end{eqnarray*}
Let $R=k[X,Y,Z]/(Z^2+(1+a_{12}^2\alpha^4)X^3Y+(a_{12}^2X+a_{02}^2)Y^3)$ and $\mathfrak{m}=(X,Y,Z)R$. The $\mathfrak{m}$-adic completion of 
$R$ is isomorphic to $k[[X,Y,Z]]/(Z^2+(1+a_{12}^2\alpha^4)X^3Y+(a_{12}^2X+a_{02}^2)Y^3)\cong k[[X,Y,Z]]/(Z^2+X^3Y+Y^3)\cong k[[X,Y,Z]]/(Z^2+X^3+XY^3)$. 
Since $((a_{02}w^2+a_{20})z+a_{12}w^2+1)_{w=\alpha}=((a_{02}w^2+\alpha^2 a_{02})z+a_{12}w^2+1)_{w=\alpha}=a_{12}\alpha^2+1\not=0$, we see that $Y$ 
is smooth on $\pi(U_1)$. Since $(b_{20}v^3+ a_{20}v^2 + b_{02}v + a_{02})|_{v=0}=a_{02}\not=0$, we see also that $Y$ is smooth on the image of the line 
defined by $v=0$ on $U_2$. Thus the configuration of the singular points of $Y$ is the type $E_7^0$. \\
(12-2) Suppose that the equation $h=0$ has different roots. Then there exists an element $\alpha$ of $k$ such that $h(\alpha)=0$ and $a_{02}\alpha^2+a_{20}\not=0$, 
since $a_{02}\beta^2+a_{20}=a_{02}\gamma^2+a_{20}=0$ implies $\beta=\gamma$ for $\beta, \gamma \in k$. We see that $Y$ has a singular point at the image of 
$((a_{12}\alpha^2+1)(a_{02}\alpha^2+a_{20})^{-1},\alpha)$ on $U_1$, which is the case (A) or (B). 
\end{proof}

Below Lemma~\ref{1}, we asked if the case that $Y$ has $m$ singular points exists for every $m$ such that $1\leq m\leq n^2-3n+3$, 
where $n=\deg L$. By Theorem~\ref{main}, we see that an invertible $1$-foliation $L$ of degree $-1$ exists and the situation that $Y$ 
has $m$ singular points happens only for $m=1, 2, 4$ and $7$ if $n=-1$. 

\bibliographystyle{amsplain}
\providecommand{\bysame}{\leavevmode\hbox to3em{\hrulefill}\thinspace}
\providecommand{\MR}{\relax\ifhmode\unskip\space\fi MR }
\providecommand{\MRhref}[2]{%
  \href{http://www.ams.org/mathscinet-getitem?mr=#1}{#2}
}
\providecommand{\href}[2]{#2}

\end{document}